\newtheorem*{theorem*}{Theorem}
\newtheorem{maintheorem}{Theorem}[section]
\newtheorem{maincorollary}[maintheorem]{Corollary}
\newtheorem{theorem}{Theorem}[section]
\newtheorem{lemma}[theorem]{Lemma}
\newtheorem{proposition}[theorem]{Proposition}
\newtheorem{corollary}[theorem]{Corollary} 
\theoremstyle{definition}
\newtheorem{definition}[theorem]{Definition}
\newtheorem*{definition*}{Definition}
\newtheorem{example}[theorem]{Example}
\newtheorem{remark}[theorem]{Remark}
\newtheoremstyle{myitemstyle}						
	{}			
	{}			
	{}			
	{}			
	{}			
	{.}			
	{ }			
	{}			
\theoremstyle{myitemstyle}
\newtheorem{myitemthm}{}
\newcommand{\R}{\mathbb{R}}
\newcommand{\Z}{\mathbb{Z}}
\newcommand{\Q}{\mathbb{Q}}
\newcommand{\PP}{\mathbb{P}}
\newcommand{\B}{\mathbb{B}}
\newcommand{\KK}{\mathbb{K}}
\newcommand{\calB}{\mathcal{B}}
\newcommand{\calI}{\mathcal{I}}
\newcommand{\calR}{\mathcal{R}}
\newcommand{\sfA}{\mathsf{A}}
\newcommand{\sfB}{\mathsf{B}}
\newcommand{\sfC}{\mathsf{C}}
\newcommand{\sfI}{\mathsf{I}}
\newcommand{\sfM}{\mathsf{M}}
\newcommand{\sfN}{\mathsf{N}}
\newcommand{\sfU}{\mathsf{U}}
\DeclareMathOperator{\id}{id}
\DeclareMathOperator{\rk}{rk}
\DeclareMathOperator{\nul}{nul}
\DeclareMathOperator{\BMat}{BMat}
\DeclareMathOperator{\supp}{supp}
\DeclareMathOperator{\Bond}{Bond}
\DeclareMathOperator{\vol}{vol}
\DeclareMathOperator{\Hess}{Hess}
\let\nul\relax
\DeclareMathOperator{\nul}{nul}
\DeclareMathAlphabet{\mymathbb}{U}{bbold}{m}{n}
\newcommand{\superimpose}[2]{{\ooalign{$#1\@firstoftwo#2$\cr\hfil$#1\@secondoftwo#2$\hfil\cr}}}
\renewcommand{\hat}{\widehat}     
\newcommand{\bigmid}{\mathrel{\big|}}
\title{Logarithmic concavity of bimatroids} 
\date{}
\author{Felix R\"ohrle}
\address{Universit\"at T\"ubingen, Fachbereich Mathematik,  72076 T\"ubingen, Germany}
\email{\href{mailto:roehrle@math.uni-tuebingen.de}{roehrle@math.uni-tuebingen.de}}
\author{Martin Ulirsch}
\address{Universität Paderborn,
Institut für Mathematik,
33098 Paderborn, Germany}
\email{\href{mailto:ulirsch@math.uni-paderborn.de}{ulirsch@math.uni-paderborn.de}}
\begin{document}

\maketitle

\begin{abstract} 
A bimatroid is a matroid-like generalization of the collection of regular minors of a matrix. In this article, we use the theory of Lorentzian polynomials to study the logarithmic concavity of natural sequences associated to bimatroids. Bimatroids can be used to characterize morphisms of matroids and this observation (originally due to Kung) allows us to prove a weak version of logarithmic concavity of the number of bases of a morphism of matroids. This is weaker than the original result by Eur and Huh; it nevertheless  provides us with a new perspective on Mason's log-concavity conjecture for independent sets of matroids. We finally show that for realizable bimatroids, the regular minor polynomial is a volume polynomial. Applied to morphisms of matroids, this shows that the weak basis generating polynomial of a morphism is a volume polynomial; this confirms a conjecture of Eur--Huh for morphisms of nullity $\leq 1$ and gives an algebro-geometric explanation for Mason's log-concavity conjecture in the realizable case.  
\end{abstract}

\setcounter{tocdepth}{1}
\tableofcontents


\section*{Introduction}

Let $a_1,\ldots, a_s$ be a sequence of non-negative real numbers. Assume that there are no \emph{internal zeros}, i.e., if $a_i \neq 0$ and $a_j \neq 0$ for $i < j$ then $a_k \neq 0$ for all $i < k < j$. In this situation we say (in decreasing generality) that the sequence is 
\begin{itemize}
\item \emph{unimodal}, if there is an index $t$ such that $a_1\leq \cdots \leq a_t\geq \cdots \geq a_s$;
\item \emph{log-concave}, if we have
\begin{equation*}
    a_k^2\geq a_{k-1}\cdot a_{k+1}
\end{equation*}
for all $1<k<s$; and 
\item \emph{ultra log-concave}, if the sequence $\widetilde{a}_k:=\nicefrac{a_k}{{s\choose k}}$ is log-concave.
\end{itemize}

(Ultra-)log-concave and unimodal sequences are objects of central interest throughout many fields of mathematics, since showing these properties typically requires the combination of insights coming from \textit{a priori} quite different fields of mathematics. Recent examples are the proofs of log-concavity for the coefficients of the characteristic polynomial of a matroid in \cite{AdiprasitoHuhKatz} and for the number of independent sets of a matroid in \cite{Lenz, AdiprasitoHuhKatz, BrandenHuh, HuhSchroeterWang} (also see \cite{ChanPak} for further refinements), both of which require an intricate combination of methods from combinatorics, algebraic geometry and, in particular, from Hodge theory. We particularly highlight the recent introduction of the theory of Lorentzian polynomials in \cite{BrandenHuh} (also see \cite{AnariLiuGharanVinzantIII,AnariLiuGharanVinzantII, AnariLiuGharanVinzantI} for an equivalent approach using so-called \enquote{homogeneous completely log-concave polynomials}), which may be thought of as a generalization of both ultra log-concave sequences and the Hodge index theorem to the realm of homogeneous polynomials. 

In this article we expand on the developments in \cite{BrandenHuh} and apply these methods to \emph{bimatroids} in the sense of \cite{Kung_bimatroids} (or, equivalently, to \emph{linking systems} in the sense of \cite{Schrijver_linkingsystems}). This will give us a new perspective on the log-concavity results for morphisms of matroids proved in \cite{EurHuh}. In particular, we give a different perspective on previous proofs of Mason's log concavity conjecture for independent sets of a matroid and prove \cite[Conjecture 5.6]{EurHuh} on volume polynomials in the special case when the morphisms have nullity $\leq 1$. 

At this point, we would like to refer the reader to the excellent survey articles \cite{Katz_matroids, AdiprasitoHuhKatz_survey, Baker_Hodge,  Huh_ICM18, Huh_tropicalgeometryofmatroids, BrandenHuh_selfcontained, Huh_ICM22,   Eur_survey}, which have significantly shaped our understanding of the subject. After this preprint has appeared on the arXiv, in \cite{GRSU}, the second author and his collaborators have found generalizations of Theorem \ref{mainthm_bimatroidregularminors=ultralogconcave} and Corollary \ref{maincor_weakMason}, but not of Theorem \ref{mainthm_logconcavemorphism}, to the setting of valuated bimatroids respectively valuated matroids.

\subsection*{Acknowledgements}
The authors would like to thank Jeffrey Giansiracusa, Kevin K\"uhn, Arne Kuhrs, Felipe Rinc\'on,  Victoria Schleis, Benjamin Schr\"oter, Pedro Souza, Hendrik {S\"u\ss}, and Igor Pak
for helpful conversations and discussions en route to this article. Central ideas for this article were conceived during the 2023 Chow Lectures at the MPI in Leipzig, given by June Huh. We thank all speakers and organizers at this occasion. Finally, we thank the anonymous referee for their careful reading of the manuscript and their thoughtful comments.

\subsection*{Funding} This project has received funding from the Deutsche Forschungsgemeinschaft (DFG, German Research Foundation) TRR 195 \emph{Symbolic Tools in Mathematics and their Application}, from Deutsche Forschungsgemeinschaft (DFG, German Research Foundation), TRR 326 \emph{Geometry and Arithmetic of Uniformized Structures}, project number 444845124, from, and TRR 358 \emph{Integral Structures in Geometry and Representation Theory}, project number 491392403, as well as from the Deutsche Forschungsgemeinschaft (DFG, German Research Foundation) Sachbeihilfe \emph{From Riemann surfaces to tropical curves (and back again)}, project number 456557832, and the DFG Sachbeihilfe \emph{Rethinking tropical linear algebra: Buildings, bimatroids, and applications}, project number 539867663, within the  
SPP 2458 \emph{Combinatorial Synergies}.

\subsection*{Notation} For a set $S$ we write $2^S$ for its power set, i.e., the set of all its subsets, and we write ${S\choose k}$ and ${S\choose \leq k}$ for the set of all subsets of size $k$ and $\leq k$, respectively. Given two sets $S$ and $S'$ as well as $s\in S-S'$ and $s'\in S'-S$, we abbreviate $S-\{s\}\cup \{s'\}$ by $S_{s\leftrightarrow s'}$.


\section{Summary of the main results}

Let $E$ and $F$ be finite sets and let $A\in \KK^{E\times F}$ be a matrix over a field $\KK$ indexed by $E\times F$. It is well-known that the collection of linearly independent subsets of the set of column vectors of $A$ carries the structure of a (realizable) \emph{matroid} on the ground set $F$. 

\subsection{Regular minors} A finer invariant is the collection of \emph{regular minors} of $A$, i.e., the collection of $(I,J)\in{E\choose k}\times {F\choose k}$ for $k=0,\ldots, \min\big\{|E|,|F| \big\}$, such that $\det [A]_{I,J}\neq 0$. The regular minors of $A$ define a matroid-like structure that was introduced by Kung in \cite{Kung_bimatroids}, a so-called (realizable) \emph{bimatroid} (see Section \ref{section_bimatroidaxioms} below for a precise definition in the general, not necessarily realizable case). Around the same time, and apparently unbeknownst to each other, Schrijver introduced the mathematically equivalent notion of a \emph{linking system} in \cite{Schrijver_linkingsystems}. In this article we follow the terminology introduced in \cite{Kung_bimatroids} and give references to the corresponding results in \cite{Schrijver_linkingsystems}, whenever appropriate.

In the realizable case we consider the extended matrix $\widehat{A}=[I_E\vert A]\in \KK^{E\times (E\sqcup F)}$, where $I_E\in \KK^{E\times E}$ denotes the identity matrix of size $|E|$. The bimatroid of regular minors of the matrix $A$ admits a cryptomorphic description as the matroid of columns of $\widehat{A}$ (see Proposition \ref{prop_extendedmatroid} below for the general, not necessarily realizable case). 

An immediate application of Lorentzian polynomials to bimatroids implies our first result. 

\begin{maintheorem}\label{mainthm_bimatroidregularminors=ultralogconcave}
Let $E$ and $F$ be finite sets and $\sfA$ a bimatroid 
on the ground set $E\times F$. Set $m=\vert E\vert$ as well as $n=\vert F\vert$ and write $m\wedge n=\min\{m,n\}$. The number $R_k(\sfA)$ of regular $k \times k$ minors is an ultra log-concave sequence, i.e., we have 
\begin{equation*}
\frac{R_k(\sfA)^2}{{m\wedge n\choose k}^2}\geq \frac{R_{k+1}(\sfA)}{{m\wedge n\choose k+1}}\cdot \frac{R_{k-1}(\sfA)}{{m\wedge n\choose k-1}}
\end{equation*} 
for all $k\geq 1$. 
\end{maintheorem}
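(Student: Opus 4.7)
The plan is to derive the ultra log-concavity as a direct consequence of the Lorentzian property of the (bigraded) basis generating polynomial of the extended matroid $\widehat{\sfA}$.

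First I invoke Proposition \ref{prop_extendedmatroid} to associate to $\sfA$ its extended matroid $\widehat{\sfA}$ on the ground set $E \sqcup F$, which has rank $m = |E|$. Under this cryptomorphism, a subset $B \subseteq E \sqcup F$ of size $m$ with $|B \cap F| = k$ is a basis of $\widehat{\sfA}$ if and only if $B = (E \setminus I) \sqcup J$ for some regular $k \times k$ minor $(I,J) \in \binom{E}{k} \times \binom{F}{k}$ of $\sfA$. In particular, the number of bases of $\widehat{\sfA}$ meeting $F$ in exactly $k$ elements equals $R_k(\sfA)$.

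Next I consider the multigraded basis generating polynomial
$$P_{\widehat{\sfA}}(x,y) \;=\; \sum_{B \in \mathcal{B}(\widehat{\sfA})} \;\prod_{e \in B \cap E} x_e \,\prod_{f \in B \cap F} y_f$$
in two families of variables $(x_e)_{e \in E}$ and $(y_f)_{f \in F}$. By the foundational theorem of Brändén--Huh, the basis generating polynomial of any matroid is Lorentzian. Since Lorentzian polynomials are closed under substitution of variables by non-negative linear forms, the specialization $x_e \mapsto s$, $y_f \mapsto t$ produces a bivariate Lorentzian polynomial
$$Q(s,t) \;=\; \sum_{k \geq 0} R_k(\sfA)\, s^{m-k}\, t^{k}$$
of degree $m$. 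A homogeneous bivariate polynomial $\sum_k a_k\, s^{d-k} t^k$ with non-negative coefficients is Lorentzian precisely when the normalized sequence $a_k/\binom{d}{k}$ is log-concave without internal zeros; this is an elementary consequence of the $2 \times 2$ Hessian characterization. Applied to $Q$, this immediately yields the claimed inequality with $\binom{m}{k}$ in place of $\binom{m \wedge n}{k}$.

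To sharpen the denominator from $\binom{m}{k}$ to $\binom{m \wedge n}{k}$, I exploit the natural duality of bimatroids: the transposed bimatroid $\sfA^T$ on $F \times E$ has regular $k \times k$ minors in bijection (by transposition of index pairs) with those of $\sfA$, so $R_k(\sfA^T) = R_k(\sfA)$. Rerunning the argument with $\sfA^T$ in place of $\sfA$ gives the analogous inequality with $\binom{n}{k}$, and choosing whichever of $m$ or $n$ is smaller gives the stated bound. The only nontrivial input is the Brändén--Huh theorem; the remaining steps---the bijection between bases of $\widehat{\sfA}$ and regular minors, the specialization to two variables, and the Hessian characterization of bivariate Lorentzian polynomials---are routine. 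I anticipate that the place where care is most needed is making sure the cryptomorphism of Proposition \ref{prop_extendedmatroid} remains valid in the non-realizable case, so that the matroidal input to the Brändén--Huh theorem is available in full generality.
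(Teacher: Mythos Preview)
Your proposal is correct and follows essentially the same route as the paper's proof: both use Proposition~\ref{prop_extendedmatroid} to pass to the extended matroid $\widehat{\sfA}$, invoke the Br\"and\'en--Huh theorem that its basis generating polynomial is Lorentzian, specialize all $E$-variables to one variable and all $F$-variables to another, read off the ultra log-concavity with $\binom{m}{k}$ from the bivariate characterization, and then repeat with $\sfA^T$ to obtain the version with $\binom{n}{k}$. Your concern about the non-realizable case is already addressed by the fact that Proposition~\ref{prop_extendedmatroid} is stated and proved for arbitrary bimatroids.
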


 We point out that we already have $R_k(\sfA)=0$, whenever $k$ is bigger than the rank of $\sfA$. In the realizable case, Theorem \ref{mainthm_bimatroidregularminors=ultralogconcave} tells us that the number $R_k(A)$ of regular $k\times k$ minors of a matrix $A\in \KK^{E\times F}$ is an ultra log-concave sequence.

\subsection{Regular rectangles} In Section \ref{section_regularrectangles} below we introduce a new cryptomorphic description of a bimatroid $\sfA$ in terms of \emph{(horizontal or vertical) regular rectangles}. In the realizable case, the regular rectangles of $A\in \KK^{E\times F}$ are precisely the (not necessarily square) submatrices $[A]_{S,T}$ for $S\subseteq E$ and $T\subseteq F$ that have maximal rank. We say that a regular rectangle is \emph{horizontal}, if $\vert S\vert \leq \vert T\vert$ and \emph{vertical}, if $\vert T\vert \leq \vert S\vert$. 

As an application of the strongest form of the Lorentzian property of the homogeneous independent set generating polynomials used in the proof of Mason's conjecture \cite{BrandenHuh}, we obtain the following result. 

\begin{maintheorem}\label{mainthm_rectanglesofmaximalrank=ultralogconcave}
Let $E$ and $F$ be finite sets and $\sfA$ a bimatroid on the ground set $E\times F$ and write $N=\vert E\vert +\vert F\vert$. Denote 
\begin{itemize}
\item by $RR^\updownarrow_k(\sfA)$ be the number of vertical regular rectangles in $\sfA$ with $k$ columns and 
\item by $RR^\leftrightarrow_k(\sfA)$  the number of horizontal regular rectangles in $\sfA$ with $k$ rows.
\end{itemize}
Then both sequences $RR^\updownarrow_k(\sfA)$ and $RR^\leftrightarrow_k(\sfA)$ are ultra log-concave. To be precise, we have
    \begin{equation*} \frac{RR_k^\updownarrow(\sfA)^2}{{N \choose k}^2} \geq \frac{RR_{k-1}^\updownarrow(\sfA)}{{N \choose k-1}} \cdot \frac{RR_{k+1}^\updownarrow(\sfA)}{{N \choose k+1}} \end{equation*}
as well as 
    \begin{equation*} \frac{RR_k^\leftrightarrow(\sfA)^2}{{N \choose k}^2} \geq \frac{RR_{k-1}^\leftrightarrow(\sfA)}{{N \choose k-1}} \frac{RR_{k+1}^\leftrightarrow(\sfA)}{{N \choose k+1}} \end{equation*}
    for all $k\geq 1$. 
\end{maintheorem}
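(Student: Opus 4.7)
The plan is to combine the cryptomorphic description of bimatroids via extended matroids (developed in the earlier section on regular rectangles together with Proposition \ref{prop_extendedmatroid}) with the multivariate Lorentzian polynomial underlying the Brändén–Huh proof of Mason's strongest log-concavity conjecture, and to specialize its variables down to the two-variable setting. Since the construction $\sfA \mapsto \sfA^T$ swaps ``vertical'' with ``horizontal'' while preserving the total cardinality $N$, I will carry out the argument for $RR^\updownarrow$ and then apply the same argument to the transpose bimatroid to obtain the statement for $RR^\leftrightarrow$.

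First I will use the bijection provided by the extended matroid $M = M_\sfA$ on $E \sqcup F$: a vertical regular rectangle $(S, T) \subseteq E \times F$ with $|T| = k$ corresponds uniquely to the independent set $I := (E \setminus S) \sqcup T$ of $M$ satisfying $|I \cap F| = k$, so that
\[
	\sum_{k \geq 0} RR^\updownarrow_k(\sfA)\, x^k\, y^{N-k}
	\;=\;
	\sum_{I \in \mathcal{I}(M)} x^{|I \cap F|}\, y^{N - |I \cap F|}.
\]
Proving that this homogeneous degree-$N$ polynomial in $(x,y)$ is Lorentzian is equivalent to the desired ultra log-concavity with normaliser $\binom{N}{k}$, since a Lorentzian degree-$N$ polynomial $\sum_k c_k x^k y^{N-k}$ is by definition one whose coefficients satisfy $(c_k/\binom{N}{k})^2 \geq (c_{k-1}/\binom{N}{k-1})(c_{k+1}/\binom{N}{k+1})$. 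To this end I invoke the fact that, for any matroid on a ground set $X$ of size $N$, the multivariate homogenised independent-set generating polynomial
\[
	H_M(y;\, z_e, e \in X) \;:=\; \sum_{I \in \mathcal{I}(M)} y^{N - |I|} \prod_{e \in I} z_e
\]
in $N+1$ variables is Lorentzian; this is the strongest form of the Lorentzian property underpinning the Brändén–Huh proof of Mason's conjecture \cite{BrandenHuh}. Substituting $z_e = y$ for $e \in E$ and $z_e = x$ for $e \in F$ (a positive linear substitution, under which the Lorentzian class is stable), each summand of $H_M$ becomes $y^{N-|I|} \cdot y^{|I \cap E|} \cdot x^{|I \cap F|} = x^{|I \cap F|}\, y^{N - |I \cap F|}$, using $|I \cap E| = |I| - |I \cap F|$. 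Thus this specialization matches the generating polynomial above; the latter is therefore Lorentzian, and the claimed inequality follows for $RR^\updownarrow$. The same argument applied to $\sfA^T$ gives the inequality for $RR^\leftrightarrow$.

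The hard part is the multivariate Lorentzian statement for $H_M$: this is the technical core of the Brändén–Huh approach to Mason's strongest form, and from the examples I have checked the naive attempt to derive it by multiplying the degree-$r$ independent-set polynomial $\sum_I y^{r-|I|}\prod_{e\in I}z_e$ by $y^{N-r}$ is subtle—the latter polynomial is not in general Lorentzian, so one really needs the genuinely multivariate $H_M$ produced along the way by \cite{BrandenHuh}. Granted that input, the rest of the argument is a two-step manipulation: the bijection with independent sets of the extended matroid, followed by the bipartite substitution that converts the stratification of independent sets by $|I|$ into the stratification by $|I \cap F|$ counted by $RR^\updownarrow_k$.
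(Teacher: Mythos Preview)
Your proof is correct and follows essentially the same route as the paper: both arguments invoke Proposition~\ref{prop_regularrectangle} to identify vertical regular rectangles with independent sets of the extended matroid $\widehat{\sfA}$, apply the Lorentzian property of the homogenized independent-set generating polynomial from \cite{BrandenHuh}, specialize via a bipartite linear substitution to obtain a two-variable Lorentzian polynomial with coefficients $RR_k^\updownarrow(\sfA)$, and then treat the horizontal case by passing to $\sfA^T$. Your commentary on why the degree-$N$ (rather than degree-$r$) homogenization is genuinely needed is apt and matches the paper's reliance on \cite[Theorem~4.10]{BrandenHuh}.
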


\subsection{Logarithmic concavity of morphisms} Let $F$ and $F'$ be finite sets and $\sfM$ and $\sfM'$ matroids on $F$ and $F'$, respectively. Recall that a map $\phi\colon F\rightarrow F'$ is said to be a \emph{morphism of matroids} if the pullback $\phi^\ast\sfM'$ is a quotient of $\sfM$ (see Section \ref{section_morphisms} below for a reminder on this and related notions). The \emph{nullity} of $\phi$ is the difference $\nul(\phi)=\rk(\sfM)-\rk(\phi^\ast\sfM')$. 

In \cite{EurHuh} the authors define a subset $T\subseteq F$ to be a \emph{basis} of a morphism $\phi$, if $T$ is contained in a basis of $\sfM$ and $\phi(T)$ contains a basis of $\sfM'$. Denote by $\calB_k(\phi)$ the set of bases of $\phi$ and by $B_k(\phi)$ the number of bases of $\phi$ of a fixed cardinality $k$. 
Motivated by the characterization of morphisms of matroids via bimatroids developed in \cite{Kung_bimatroids} (see Section~\ref{section_basesofmorphisms}), we can prove the following result.

\begin{maintheorem}\label{mainthm_logconcavemorphism}
Let $\phi\colon \sfM\rightarrow\sfM'$ be a morphism of matroids. Then the sequence $B_k(\phi)$ is log-concave, i.e., we have
\begin{equation*}
B_k(\phi)^2 \geq B_{k+1}(\phi)\cdot B_{k-1}(\phi)
\end{equation*} 
for all $k\geq 1$. 
\end{maintheorem}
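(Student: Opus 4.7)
The plan is to reduce Theorem \ref{mainthm_logconcavemorphism} to Theorem \ref{mainthm_bimatroidregularminors=ultralogconcave} via Kung's encoding of matroid morphisms as bimatroids.

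First, I would invoke Kung's construction (to be laid out in Section \ref{section_basesofmorphisms}) to associate to the morphism $\phi \colon \sfM \to \sfM'$ an auxiliary bimatroid $\sfA_\phi$ on a ground set of the form $E \times F$, with $E$ a canonically chosen index set built from the data of $\sfM'$ and from $\phi$ itself. The construction should be arranged so that the regular-minor structure of $\sfA_\phi$ faithfully captures the Eur--Huh notion of a basis of $\phi$.

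Second, I would establish the counting identity
\begin{equation*}
B_k(\phi) \;=\; R_k(\sfA_\phi) \qquad \text{for all } k \geq 1,
\end{equation*}
by exhibiting a bijection $T \mapsto (I(T), T)$ between $\calB_k(\phi)$ and the regular $k \times k$ minors of $\sfA_\phi$; the row-partner $I(T) \in \binom{E}{k}$ is canonically determined by $T$ together with the morphism data, while the inverse simply sends a regular minor $(I,J)$ to its column index $J$.

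Third, with this identification in place, Theorem \ref{mainthm_bimatroidregularminors=ultralogconcave} applied to $\sfA_\phi$ yields the ultra log-concavity of $R_k(\sfA_\phi)$, and hence of $B_k(\phi)$. Since the binomial coefficients themselves satisfy $\binom{N}{k}^2 \geq \binom{N}{k-1}\binom{N}{k+1}$, ultra log-concavity implies ordinary log-concavity, so the claimed inequality $B_k(\phi)^2 \geq B_{k+1}(\phi)\cdot B_{k-1}(\phi)$ drops out for every $k \geq 1$.

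The main obstacle, as I foresee it, is the second step: constructing $\sfA_\phi$ with the right combinatorics and verifying the bijection between its regular $k \times k$ minors and the bases of $\phi$. The delicate point is that for a basis $T$ of $\phi$ the restriction $\phi|_T$ may fail to be injective, so $|\phi(T)|$ can be strictly smaller than $|T|$; nevertheless, the row partner $I(T)$ must have cardinality exactly $|T| = k$. Making sense of this extra freedom via the auxiliary index set $E$ is precisely the content of Kung's cryptomorphism, and carefully matching it against the Eur--Huh definition of a basis of a morphism is what the bulk of Section \ref{section_basesofmorphisms} must accomplish.
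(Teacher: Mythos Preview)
There is a genuine gap in your second step. The construction in Section~\ref{section_basesofmorphisms} does \emph{not} produce a bimatroid $\sfA_\phi$ whose regular $k\times k$ minors are in bijection with $\calB_k(\phi)$. What Proposition~\ref{prop_basesofmorphismsasbimatroids} actually builds is a matroid $\widetilde{\sfM}_\phi$ on $Q\sqcup F$ with $|Q|=r=\rk\sfM$, in which a subset $S\sqcup T$ (with $S\subseteq Q$, $T\subseteq F$) is a basis if and only if $T\in\calB(\phi)$ and $|S|=r-|T|$. The row partner is \emph{not} canonically determined by $T$: every $S\in\binom{Q}{r-k}$ works, so each $T\in\calB_k(\phi)$ contributes $\binom{r}{r-k}=\binom{r}{k}$ bases of $\widetilde{\sfM}_\phi$. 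The resulting count is $\binom{r}{k}B_k(\phi)$, not $B_k(\phi)$. (Note also that $Q$ is a basis of $\widetilde{\sfM}_\phi$ only when $\rk\sfM'=0$, so $\widetilde{\sfM}_\phi$ is in general not the extended matroid of any bimatroid; the paper therefore applies the Lorentzian property of matroid basis generating polynomials directly rather than passing through Theorem~\ref{mainthm_bimatroidregularminors=ultralogconcave}.)

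This extra binomial factor is not a cosmetic nuisance; it is exactly what limits the conclusion to plain log-concavity. After specializing variables, the Lorentzian bivariate polynomial has coefficients $\binom{r}{k}B_k(\phi)$, and its ultra log-concavity reads
\[
\frac{\big(\binom{r}{k}B_k(\phi)\big)^2}{\binom{r}{k}^2}\;\geq\;\frac{\binom{r}{k-1}B_{k-1}(\phi)}{\binom{r}{k-1}}\cdot\frac{\binom{r}{k+1}B_{k+1}(\phi)}{\binom{r}{k+1}},
\]
which collapses to $B_k(\phi)^2\geq B_{k-1}(\phi)B_{k+1}(\phi)$ and nothing more. If your proposed bijection $T\mapsto(I(T),T)$ existed, Theorem~\ref{mainthm_bimatroidregularminors=ultralogconcave} would yield ultra log-concavity of $B_k(\phi)$ itself---but that is precisely the stronger Eur--Huh theorem, which the paper explicitly does not recover by these methods. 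The ``extra freedom'' you correctly flag as the delicate point is not resolved by a canonical choice; it is absorbed into the binomial multiplicity, and that is why only log-concavity survives.
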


We point out that in \cite[Theorem 1.3]{EurHuh} the authors prove that the sequence $B_k(\phi)$ itself is ultra log-concave, so our result is a weaker version of theirs. 
It has been noted in \cite{EurHuh} that, when $\sfM'$ is the uniform matroid $\sfU_{0,1}$, the set of bases of $\phi$ is equal to the set of independent subsets of $\sfM$; hence the ultra log-concavity of $B_k(\phi)$ implies the strongest version of Mason's conjecture on the ultra log-concavity of the number of independent sets of a matroid $\sfM$. 
In our case, Theorem \ref{mainthm_logconcavemorphism} implies the following weaker version of Mason's conjecture.

\begin{maincorollary}\label{maincor_weakMason}
    Let $\sfM$ be a matroid and denote by $I_k(\sfM)$ the number of independent sets in $\sfM$ of cardinality $k$. Then the sequence $I_k(\sfM)$ is log-concave. 
\end{maincorollary}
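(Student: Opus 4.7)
The plan is to deduce the corollary directly from Theorem \ref{mainthm_logconcavemorphism} by exhibiting the numbers $I_k(\sfM)$ as the basis numbers $B_k(\phi)$ of a well-chosen morphism. Given a matroid $\sfM$ on ground set $F$, I would consider the constant map $\phi\colon F\to \{\ast\}$, where $\{\ast\}$ is the single-element ground set of the uniform matroid $\sfU_{0,1}$ of rank~$0$.

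First I would verify that $\phi$ is a morphism of matroids in the sense of Section \ref{section_morphisms}: the pullback $\phi^\ast \sfU_{0,1}$ is the rank-$0$ matroid on $F$ (since the rank function of $\sfU_{0,1}$ is identically zero), and this is a quotient of every matroid on $F$. Hence $\phi$ is a valid morphism with nullity $\nul(\phi)=\rk(\sfM)$. Next I would identify the bases of $\phi$: by the definition recalled just before Theorem \ref{mainthm_logconcavemorphism}, a subset $T\subseteq F$ is a basis of $\phi$ iff (i)~$T$ is contained in a basis of $\sfM$, equivalently $T$ is independent in $\sfM$, and (ii)~$\phi(T)$ contains a basis of $\sfU_{0,1}$. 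Since the unique basis of $\sfU_{0,1}$ is the empty set, condition (ii) is automatic, and therefore $\calB_k(\phi)$ is exactly the set of independent subsets of $\sfM$ of size $k$; in particular $B_k(\phi)=I_k(\sfM)$ for all $k$.

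Finally, the sequence $I_k(\sfM)$ has no internal zeros (any subset of an independent set is independent, so $I_k(\sfM)>0$ implies $I_j(\sfM)>0$ for all $j\leq k$), so the log-concavity inequality is meaningful. Applying Theorem \ref{mainthm_logconcavemorphism} to $\phi$ yields $I_k(\sfM)^2\geq I_{k-1}(\sfM)\cdot I_{k+1}(\sfM)$ for all $k\geq 1$, which is the desired statement. There is no real obstacle here: the entire content is absorbed into Theorem \ref{mainthm_logconcavemorphism}, and the only thing to check is the bookkeeping between bases of $\phi$ and independent sets of $\sfM$, which the observation in \cite{EurHuh} recalled in the paragraph preceding the corollary essentially hands us.
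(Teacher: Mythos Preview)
Your proposal is correct and follows exactly the route the paper takes: the paper observes (in Section~\ref{section_morphisms}) that when $\sfM'=\sfU_{0,1}$ the bases of $\phi$ are precisely the independent sets of $\sfM$, and deduces Corollary~\ref{maincor_weakMason} from Theorem~\ref{mainthm_logconcavemorphism} without further argument. Your additional verification that the sequence has no internal zeros is a welcome detail but not something the paper spells out.
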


For proofs of the ultra log-concavity of the sequence, i.e., the strongest version of Mason's conjecture, we of course refer the reader once again to \cite{AnariLiuGharanVinzantIII, BrandenHuh, HuhSchroeterWang}. Further refinements of both Mason's conjecture and \cite[Theorem 1.3]{EurHuh} can be found \cite[Theorems 1.6 and 1.16]{ChanPak} and the latter also contains criteria for when the inequalities are equalities. 

\subsection{Volume polynomials and the Eur--Huh conjecture} Let $X$ be a normal irreducible projective variety of dimension $d$ over an algebraically closed field $\mathbb{K}$ and consider a collection $H$ of nef $\Q$-divisors $H_1,\ldots, H_n$ on $X$. Then, by \cite[Theorem 4.6]{BrandenHuh} the \emph{volume polynomial}
\begin{equation*}
\vol_H(w):=(w_1H_1+\cdots+w_nH_n)^d=\sum_{\vert \alpha\vert =d}\frac{d!}{\alpha!} (H_1^{\alpha_1}\cdots H_n^{\alpha_n}) \cdot w^\alpha
\end{equation*}
is Lorentzian. The converse does not hold; i.e., not every Lorentzian polynomial is a volume polynomial. 
It is therefore a natural question to ask whether a given Lorentzian polynomial is a volume polynomial. We refer the reader to \cite{EurLarson} for a more elaborate theory of volume polynomials associated to discrete polymatroids. 

Given a bimatroid $\sfA\in\BMat^{E\times F}$ of rank $r$, we may consider its \emph{homogeneous regular minor polynomial}
\begin{equation*}
p_{\calR(\sfA)}(w)=\sum_{(I,J)\in\calR(\sfA)} \prod_{e\in I^c}w_e\cdot\prod_{f\in J}w_f \ ,
\end{equation*}
which is Lorentzian, since it agrees with the basis generating polynomial of the associated extended matroid $\widehat{\sfA}$. Using the construction of matroid Schubert varieties, originally due to Ardila and Boocher \cite{ArdilaBoocher}, we find the following result.

\begin{maintheorem}\label{mainthm_regularminorpolynomial=volumepolynomial}
Let $\sfA$ be a bimatroid which is realizable over an algebraically closed field $\mathbb{K}$. Then the homogeneous regular minor polynomial is a volume polynomial.
\end{maintheorem}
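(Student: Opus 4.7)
The plan is to realize $p_{\calR(\sfA)}$ as a volume polynomial on the matroid Schubert variety of the extended matroid $\widehat{\sfA}$. Recall first that the bases of $\widehat{\sfA}$ are exactly the subsets of the form $(E\setminus I)\sqcup J\subseteq E\sqcup F$ with $(I,J)\in\calR(\sfA)$: indeed, expanding the determinant of an $|E|\times|E|$ submatrix of $\widehat{A}=[I_E\mid A]$ along the columns supplied by $I_E$ reduces its non-vanishing to $\det[A]_{I,J}\neq 0$. Hence $p_{\calR(\sfA)}(w)=p_{\widehat{\sfA}}(w)$ is the basis generating polynomial of the (still realizable) matroid $\widehat{\sfA}$ on ground set $E\sqcup F$. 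Let $L\subseteq\KK^{E\sqcup F}$ be the row span of $[I_E\mid A]$, a linear subspace of dimension $r=|E|$ realizing $\widehat{\sfA}$, and following \cite{ArdilaBoocher} let $Y:=Y_{\widehat{\sfA}}$ be its Zariski closure in $X:=(\mathbb{P}^1)^{E\sqcup F}$ under the coordinatewise inclusion $\KK\hookrightarrow\mathbb{P}^1$. Then $Y$ is a normal, irreducible, projective variety of dimension $r$, and for each $v\in E\sqcup F$ the restriction $H_v$ to $Y$ of the pullback of $\calO_{\mathbb{P}^1}(1)$ under the $v$-th projection $X\to\mathbb{P}^1$ is globally generated, hence nef.

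The heart of the argument is the intersection-theoretic identity, essentially due to Ardila and Boocher, that
\[ \int_Y H_{v_1}\cdots H_{v_r} = \begin{cases} 1, & \{v_1,\ldots,v_r\}\in\calB(\widehat{\sfA}),\\ 0, & \text{otherwise}, \end{cases} \]
for every $r$-tuple $v_1,\ldots,v_r\in E\sqcup F$. Heuristically, intersecting $Y$ with $r$ generic fibers of the chosen coordinate projections imposes $r$ affine linear equations on the realization $L$, whose unique solvability coincides with the projection $L\to\KK^{\{v_1,\ldots,v_r\}}$ being an isomorphism---that is, with $\{v_1,\ldots,v_r\}$ forming a basis of $\widehat{\sfA}$. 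Making this rigorous requires ruling out residual contributions from the boundary $\overline{L}\setminus L$ as well as verifying transversality via Kleiman-Bertini; this is the main technical obstacle, and it is resolved in \cite{ArdilaBoocher} by a careful analysis of how $Y$ meets the torus-invariant strata of the ambient $X$.

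Granted the intersection formula, the remainder of the proof is a short computation. Since each $H_v$ is pulled back from the curve $\mathbb{P}^1$, we have $H_v^2=0$ in the Chow ring of $Y$, so only squarefree monomials survive in the expansion of $\bigl(\sum_v w_v H_v\bigr)^r$, yielding
\[ \Bigl(\sum_{v\in E\sqcup F} w_v H_v\Bigr)^r = r!\sum_{B\in\calB(\widehat{\sfA})}\prod_{v\in B}w_v = r!\cdot p_{\calR(\sfA)}(w). \]
Working with nef $\RR$-divisor classes as in \cite[Theorem 4.6]{BrandenHuh}, we rescale to $H_v':=(r!)^{-1/r}H_v$ to absorb the combinatorial factor, and conclude that $p_{\calR(\sfA)}$ is the volume polynomial of the nef classes $(H_v')_{v\in E\sqcup F}$ on $Y$, as required.
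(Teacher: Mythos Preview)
Your argument is correct and follows essentially the same route as the paper: identify $p_{\calR(\sfA)}$ with the basis generating polynomial of the realizable extended matroid $\widehat{\sfA}$, and then invoke the Ardila--Boocher intersection formula on the matroid Schubert variety to exhibit it as a volume polynomial. The paper's proof is a two-line citation of Proposition~\ref{prop_extendedmatroid} and \cite[Theorem~1.3~(c)]{ArdilaBoocher}; you have unpacked precisely what that citation amounts to. One minor caveat: your final rescaling by $(r!)^{-1/r}$ produces nef $\R$-divisors, whereas the paper's stated definition of ``volume polynomial'' asks for $\Q$-divisors---the paper itself glosses over this same normalization in its example preceding the proof, so you are in no worse shape, but strictly speaking the $r!$ factor deserves a word of justification under the $\Q$-divisor convention.
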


Deciding whether a given Lorentzian polynomial is a volume polynomial tends to be a difficult question, which requires significant geometric insight. 
Since our proof of Theorem \ref{mainthm_logconcavemorphism} via bimatroids is more direct and does not involve a limit argument (unlike the one in \cite{EurHuh}), Theorem \ref{mainthm_regularminorpolynomial=volumepolynomial} implies the following result in the case that both $\sfM$ and $\sfM'$ as well as the morphism $\phi$ are realizable.

\begin{maintheorem}\label{mainthm_weakvolumepolynomial}
    Let $\sfM$ and $\sfM'$ be matroids on finite sets $F$ and $F'$ and suppose that $\phi\colon F\rightarrow F'$ defines a morphism such that $\sfM$, $\sfM'$, and $\phi$ are realizable over an algebraically closed field $\mathbb{K}$. Denote by $r$ the rank of $\sfM$ and by $\nul(\phi)$ the \emph{nullity} of $\phi$. Given an integer $\alpha\geq \nul(\phi)$ the \emph{$\alpha$-weak homogeneous basis generating polynomial}
    \begin{equation*}
        p^\alpha_{\calB(\phi)}(w):=\sum_{k\geq 0} \, \sum_{T\in \calB_k(\phi)}{\alpha\choose r-k}w_0^{r-k}\prod_{f\in T} w_f
    \end{equation*}
    is a volume polynomial.
\end{maintheorem}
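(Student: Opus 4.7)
The plan is to deduce Theorem \ref{mainthm_weakvolumepolynomial} from Theorem \ref{mainthm_regularminorpolynomial=volumepolynomial} by exhibiting $p^\alpha_{\calB(\phi)}$ as a variable specialization of the basis generating polynomial of an auxiliary realizable matroid. Fix a realization of the morphism by vectors $(v_f)_{f\in F}\subseteq\KK^r$ realizing $\sfM$ together with a surjective linear map $\pi\colon\KK^r\twoheadrightarrow\KK^{r'}$ satisfying $\pi(v_f)=v'_{\phi(f)}$, so that $\ker\pi$ has dimension $\nul(\phi)=r-r'$. Introduce a set $A$ of cardinality $\alpha$ and pick vectors $(u_a)_{a\in A}\subseteq\ker\pi$ in general position, which is possible since $\KK$ is infinite. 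Define $\sfN$ to be the rank $r$ matroid on $F\sqcup A$ realized by the combined collection $(v_f)_{f\in F}\cup(u_a)_{a\in A}$ in $\KK^r$.

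The key technical step is the claim that the bases of $\sfN$ are precisely the disjoint unions $T\sqcup S$ with $T\in\calB_k(\phi)$ and $S\subseteq A$ of cardinality $r-k$. Since the $u_a$ all lie in the $\nul(\phi)$-dimensional subspace $\ker\pi$, any $S\subseteq A$ with $|S|>\nul(\phi)$ is automatically dependent, forcing $|T|\geq r'$ in every basis of $\sfN$. Conversely, for $T$ independent in $\sfM$ and $|S|\leq\nul(\phi)$, a dimension count for $\Span(v_f)_{f\in T}\cap\Span(u_a)_{a\in S}\subseteq\ker\pi$, using the general position of the $u_a$'s, shows this intersection has dimension $\max\bigl(0,\,r'-\rk_{\phi^\ast\sfM'}(T)\bigr)$. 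Hence $T\sqcup S$ is a basis of $\sfN$ exactly when $\rk_{\phi^\ast\sfM'}(T)=r'$, i.e., when $\phi(T)$ contains a basis of $\sfM'$. Combined with independence of $T$ in $\sfM$, this is precisely the defining property of $T\in\calB_k(\phi)$.

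Granting the basis characterization, the conclusion follows formally. Designating any basis of $\sfN$ as the row index set exhibits $\sfN$ as the extended matroid of a realizable bimatroid, so Theorem \ref{mainthm_regularminorpolynomial=volumepolynomial} applies and shows that the basis generating polynomial $p_\sfN(w)=\sum_B\prod_{x\in B}w_x$ is the volume polynomial of some normal projective variety together with nef $\Q$-divisors $(H_x)_{x\in F\sqcup A}$. Identifying $w_a\mapsto w_0$ for all $a\in A$ corresponds geometrically to replacing the tuple of divisors $(H_a)_{a\in A}$ by the single divisor $\sum_{a\in A}H_a$, an operation which manifestly preserves being a volume polynomial. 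A direct computation using the basis characterization then yields
\begin{equation*}
p_\sfN(w)\bigm|_{w_a=w_0}=\sum_{k}\binom{\alpha}{r-k}w_0^{r-k}\sum_{T\in\calB_k(\phi)}\prod_{f\in T}w_f=p^\alpha_{\calB(\phi)}(w).
\end{equation*}
The main obstacle is the uniform generic dimension count in the second paragraph and its clean translation into the matroid-theoretic condition that $\phi(T)$ contain a basis of $\sfM'$; everything else is essentially bookkeeping, once this identification is made.
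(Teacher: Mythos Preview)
Your argument is correct and follows essentially the same route as the paper: choose $\alpha$ generic vectors $u_a$ in the kernel of the realizing linear map, form the auxiliary matroid on $F\sqcup A$, identify its bases with the pairs $(T,S)$ where $T\in\calB_k(\phi)$ and $\vert S\vert=r-k$, apply Ardila--Boocher (you route through Theorem~\ref{mainthm_regularminorpolynomial=volumepolynomial}, the paper cites \cite{ArdilaBoocher} directly, but this is the same content), and then specialize $w_a\mapsto w_0$ via Proposition~\ref{prop_invariancevolumepolynomials}(ii). Your dimension-count justification for the basis characterization is a clean reformulation of the paper's determinant-vanishing argument; note only that your displayed intersection dimension $\max\bigl(0,\,r'-\rk_{\phi^\ast\sfM'}(T)\bigr)$ already presupposes $\vert T\vert+\vert S\vert=r$, which you might make explicit. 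One small omission: you build in the surjectivity of $\pi$ from the start, whereas the theorem as stated allows $\phi(F)$ not to span $\sfM'$; in that case $\calB(\phi)=\emptyset$ and $p^\alpha_{\calB(\phi)}=0$ is trivially a volume polynomial, so a sentence disposing of this case would close the gap.
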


We point out two interpretations of Theorem~\ref{mainthm_weakvolumepolynomial}. First, if we choose $\alpha=r$, then Theorem \ref{mainthm_weakvolumepolynomial} (together with \cite[Theorem 4.6]{BrandenHuh}) provides us with an algebro-geometric explanation for the validity of Theorem \ref{mainthm_logconcavemorphism} in the realizable case and, in particular, for Mason's conjecture on the log-concavity of independent sets, as stated in Corollary \ref{maincor_weakMason}.

Second, one may view Theorem \ref{mainthm_weakvolumepolynomial} as a weak version of \cite[Conjecture 5.6]{EurHuh}, which states that, in the realizable case, the \emph{homogeneous basis generating polynomial} 
    \begin{equation*}
        p_{\calB(\phi)}(w):=\sum_{k\geq 0} \, \sum_{T\in \calB_k(\phi)}w_0^{\vert F\vert-k}\prod_{f\in T} w_f
    \end{equation*}
of $\phi$ is a volume polynomial. 

Suppose that the nullity of $\phi$ is $\leq 1$, which means that $\calB_k(\phi)=\emptyset$ unless $k=\rk\sfM$ or $k=\rk \sfM-1$. In this case we may choose $\alpha=\nul(\phi)$ and we have $p_{\calB(\phi)}(w)=w_0^{\vert F\vert-r}\cdot p^{\nul(\phi)}_{\calB(\phi)}(w)$. This product is again a volume polynomial by Proposition \ref{prop_invariancevolumepolynomials} (i) and (ii) below. Theorem \ref{mainthm_weakvolumepolynomial} therefore confirms the Eur--Huh conjecture for morphisms of nullity $\leq 1$.


\section{Bimatroids -- the basic story}
In this section we recall the basic theory of bimatroids, as originally introduced in \cite{Kung_bimatroids} (also see \cite{Schrijver_linkingsystems} for an alternative approach under the name \emph{linking systems} and \cite{Murota_valuatedbimatroids} as well as \cite{GRSU} for a valuated version of this story). This part is mostly expository and is meant to introduce the notation used in the remainder of the article. In Section \ref{section_regularrectangles}, we introduce a new cryptomorphic characterization of bimatroids in terms of \emph{regular rectangles}.

\subsection{Regular minors and extended matroid} \label{section_bimatroidaxioms} Bimatroids are an abstraction of the combinatorial structure of the set of regular minors of a given matrix, just like matroids are an abstraction of the combinatorial structure of linear independence of the columns of a matrix.

\begin{definition}\label{def_regularminors}
    Let $E$ and $F$ be finite sets and write ${E\choose \ast}\times {F\choose \ast}$ for the union $\bigcup_{d\geq 0} \Big({E\choose d}\times {F\choose d}\Big)$. A \emph{bimatroid} $\sfA$ on the ground set $E\times F$ is given by a subset $\calR(\sfA)$ of ${E\choose \ast}\times {F\choose \ast}$, called the set of \emph{regular minors} of $\sfA$, which fulfils the following axioms:
    \begin{enumerate}
        \item For $d=0$, the pair $(\emptyset, \emptyset)$ is a regular minor. 
        \item Let $(I, J)$ and $(I', J')$ be regular minors of $\sfA$.
        \begin{enumerate}
            \item For every $i'\in I'-I$ at least one of the following holds:
            \begin{itemize}
                \item there is $i\in I-I'$ such that $(I_{i\leftrightarrow i'},J)$ is a regular minor or
                \item there is $j'\in J'-J$ such that $\big(I\cup\{i'\}, J\cup\{j'\}\big)$ is a regular minor. 
            \end{itemize}
            \item For every $j\in J-J'$ at least one of the following holds:
            \begin{itemize}
                \item there is $j'\in J'-J $ such that $(I, J_{j\leftrightarrow j'})$ is a regular minor, or
                \item there is $i\in I-I'$ such that $\big(I-\{i\}, J-\{j\}\big)$ is a regular minor. 
            \end{itemize}
        \end{enumerate}
    \end{enumerate}
\end{definition}

We may write $\calR(\sfA)=\bigcup_{d\geq 0}\calR_d(\sfA)$, where $\calR_d(\sfA)$ denotes the set of regular minors of size $d\times d$ for $0\leq d\leq \min\big\{\vert E\vert,\vert F\vert\big\}$. The set of bimatroids on the ground set $E\times F$ is denoted by $\BMat^{E\times F}$. 

There is a natural bijection between ${E\choose \ast}\times {F\choose \ast}$ and ${E\sqcup F\choose \vert E\vert}$, given by the association 
\begin{equation}\label{eq_fundamentalbijection}
(I,J)\longmapsto I^c\sqcup J \ . 
\end{equation}
Via this bijection we may identify the set $\calR(\sfA)$ of \emph{regular minors} of $\sfA$ on $E\times F$ with the set of bases of a matroid $\widehat{\sfA}$ on the ground set $E \sqcup F$. The matroid $\widehat{\sfA}$ is called the \emph{extended matroid} associated to $\sfA$. 

\begin{proposition}\label{prop_extendedmatroid}
Let $E$ and $F$ be finite sets. A subset of ${E\choose \ast}\times {F\choose \ast}$ is the set of regular minors of a bimatroid $\sfA$ if and only if under bijection \eqref{eq_fundamentalbijection} it corresponds to the set of bases of a matroid $\widehat{\sfA}$ on $E\sqcup F$ such that $E$ is a basis. 
\end{proposition}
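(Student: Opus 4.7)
The plan is to verify that, under the bijection \eqref{eq_fundamentalbijection}, the bimatroid axioms of Definition \ref{def_regularminors} translate precisely into the basis axioms of a matroid on $E \sqcup F$, together with the extra requirement that $E$ is a basis. Both directions of the equivalence then follow from the same translation. As a first observation, since $\vert I^c \sqcup J\vert = \vert E\vert$ whenever $\vert I\vert = \vert J\vert$, the image of $\calR(\sfA)$ under \eqref{eq_fundamentalbijection} automatically consists of equicardinal subsets of $E \sqcup F$, which is consistent with them being the bases of a matroid. Axiom (1), i.e.\ $(\emptyset, \emptyset) \in \calR(\sfA)$, is visibly equivalent to the statement that $E = \emptyset^c \sqcup \emptyset$ is a basis.

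The core step is the equivalence between axiom (2) and the symmetric basis exchange property. Given two elements $(I,J), (I',J') \in \calR(\sfA)$ and writing $B = I^c \sqcup J$ and $B' = (I')^c \sqcup J'$, a direct set-theoretic computation yields
\begin{equation*}
B \setminus B' = (I' \setminus I) \sqcup (J \setminus J'), \qquad B' \setminus B = (I \setminus I') \sqcup (J' \setminus J).
\end{equation*}
Thus an exchange element $x \in B \setminus B'$ is either of the form $i' \in I' \setminus I$ or of the form $j \in J \setminus J'$, and this dichotomy is precisely the distinction between axioms (2)(a) and (2)(b). I would then match, case by case, the two possible outcomes of a basis exchange with the two regular minors appearing in the corresponding subaxiom: for $x = i'$, the exchange produces either $(I_{i \leftrightarrow i'}, J)$ (if $y$ is exchanged within $E$) or $(I \cup \{i'\}, J \cup \{j'\})$ (if $y \in J' \setminus J$); and for $x = j$, it produces either $(I, J_{j \leftrightarrow j'})$ (if $y \in J' \setminus J$) or $(I \setminus \{i\}, J \setminus \{j\})$ (if $y \in I \setminus I'$).

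The main source of friction will be bookkeeping rather than any substantive mathematical step. The subsets of $B$ lying in $E$ correspond to the \emph{complement} $I^c$, so adding or removing an element of $E$ from $B$ corresponds to removing or adding that element from $I$, whereas on the $F$-side the correspondence is direct. Once this is tracked carefully through each of the four combinations $(x,y)$ above, the forward direction reads off basis exchange from axioms (2)(a) and (2)(b), and the converse direction reads off axioms (2)(a) and (2)(b) from basis exchange applied separately to the two types of elements in $B \setminus B'$. Combined with the translation of axiom (1) noted above, this yields the desired cryptomorphism.
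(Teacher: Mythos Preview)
Your proposal is correct and follows exactly the approach the paper takes: the paper's proof is simply the one-sentence observation that ``the basis exchange axioms for $\widehat{\sfA}$ are precisely equivalent to the axioms defining a bimatroid,'' and you have spelled out this translation in detail. One minor terminological point: what you are verifying is the ordinary basis exchange axiom (for every $x\in B\setminus B'$ there is $y\in B'\setminus B$ with $(B\setminus\{x\})\cup\{y\}$ a basis), not the symmetric version, but this does not affect the argument.
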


In order to prove Proposition \ref{prop_extendedmatroid} it is enough to observe that the basis exchange axioms for $\widehat{\sfA}$ are precisely equivalent to the axioms defining a bimatroid.

\begin{example}[Realizable bimatroids]\label{example_realizablebimatroids}
Let $E$ and $F$ be finite sets, let $\mathbb{K}$ be a field and $A\in \mathbb{K}^{E\times F}$ a matrix. Then the set $\calR(A)$ of those square submatrices of $A$, whose determinant does not vanish, form the regular minors of a bimatroid on the ground set $E\times F$. 

One way to see this is to consider the extended matrix
\begin{equation*}
\widehat{A}=\big[I_E\vert A\big] \in \mathbb{K}^{E\times (E\sqcup F)}
\end{equation*}
and to note that the set of bases of the vector space $\mathbb{K}^E$ among the column vectors form the bases of a matroid on $E\sqcup F$. Given an element in ${E\sqcup F\choose \vert E\vert}$, written as $I^c\sqcup J$ for $(I,J)\in{E\choose \ast}\times {F\choose \ast}$, we find that
\begin{equation*} 
\det \big[\widehat{A}\big]_{E,I^c\sqcup J}=\det \big[A\big]_{I,J} \ .
\end{equation*}
So these bases are in natural one-to-one correspondence with the regular minors of $A$. 
\end{example}

\begin{example}[Relations] \label{example_relations}
Let $E$ and $F$ be finite sets and let $R$ be a \emph{relation} between $E$ and $F$, i.e., a subset of $E\times F$. Given $(I,J)\in{E\choose \ast}\times{F\choose \ast}$, a \emph{matching} between  $I$ and $J$ is the graph of a bijection between $I$ and $J$. The set of $(I,J)\in {E\choose \ast}\times{F\choose \ast}$, for which $R$ contains a matching between $I$ and $J$ defines a bimatroid $[R]\in\BMat^{E\times F}$. 

One way to show this is to note that a relation between $E$ and $F$ may be interpreted as a Boolean matrix $A[R]\in \B^{E\times F}$. Then the bimatroid $[R]$ may be described as the Stiefel matroid on $E\sqcup F$ associated to the extended Boolean matrix $\widehat{A}[R]=\big[I_E\vert A[R]\big]\in\B^{E\times (E\sqcup F)}$, where $I_E$ is the tropical identity matrix (see \cite[Section 3.1]{FinkRincon_Stiefel} for details). In fact, we may choose a sufficiently generic lift of the Boolean matrix $\widehat{A}[R]=\big[I_E\vert A[R]\big]$ (which is always possible over an infinite field) and apply Example \ref{example_realizablebimatroids}. 

Note that the graph of every map $\phi\colon F\rightarrow E$ is a relation between $E$ and $F$. So, in particular, we have an induced bimatroid $[\phi]\in\BMat^{E\times F}$.
\end{example}

\begin{example}[Bond bimatroids]\label{example_bondbimatroids}
Let $F$ be a finite set and $\sfM$ a matroid on $F$ of rank $r$. Choose a basis $B$ of $\sfM$. The \emph{bond bimatroid} $\Bond_B(\sfM)\in\BMat^{B\times F}$ of $\sfM$ with respect to the basis $B$ is defined as follows: A pair $(I,J)\in {B\choose\ast}\times{F\choose \ast}$ is a regular minor of $\Bond_B(\sfM)$ if and only if the (not necessarily disjoint) union $I^c\cup J$ is a basis of $\sfM$. 

A quick way to prove this is to observe that the regular minors of $\Bond_B(\sfM)$ naturally correspond to the set of bases of the matroid on the disjoint union $B\sqcup F$, where a subset of size $r$ is a basis if and only if its image in $B\cup F = F$ is a basis of $\sfM$. 
\end{example}

\subsection{Relative rank}
Let $E$ and $F$ be finite sets. A bimatroid $\sfA$ on $E\times F$ admits another cryptomorphic description in terms of the \emph{relative rank function} $r_{\sfA}\colon 2^E\times 2^F\rightarrow \Z_{\geq 0}$, which associates to $(S,T)\in 2^E\times 2^F$ the maximal number $d\geq 0$, for which there is a regular $d\times d$-minor $(I,J)\in\calR_d(\sfA)$ with $I\subseteq S$ and $J\subseteq T$. So we always have $0 \leq r(S, T) \leq \min\big\{|S|, |T|\big\}$ for all $(S,T)\in 2^E\times 2^F$. The number $r_{\sfA}(E,F)$ is called the \emph{rank} of the bimatroid $\sfA$. 

The relative rank provides us with a cryptomorphic description of bimatroids (see \cite[Section 5]{Kung_bimatroids} as well as \cite[(alternative) Definition 2.2]{Schrijver_linkingsystems}). 

\begin{proposition}\label{prop_relativerank}
Let $E$ and $F$ be two finite sets. A function $r\colon 2^E\times 2^F\rightarrow \Z_{\geq 0}$ is the relative rank function of a bimatroid $\sfA$ if and only if it fulfils the following properties:
\begin{enumerate}
\item For every $(S, T) \in 2^E \times 2^F$ we have $r(S, T) \leq \min\big\{ |S|, |T| \big\}$.
\item For every $(S,T)\in 2^E\times 2^F$ and $e\in E$ and $f\in F$, we have
\begin{equation*}
r(S,T)\leq r\big(S\cup\{e\},T\big)\leq r(S,T)+1 
\end{equation*}
as well as  
\begin{equation*}
 r(S,T)\leq r\big(S,T\cup\{f\}\big)\leq r(S,T)+1 \ .
\end{equation*}
\item For two pairs $(S,T), (S',T')\in 2^E\times 2^F$, we have
\begin{equation*}
r(S,T)+r(S',T')\geq r(S\cup S',T\cap T') + r(S\cap S', T\cup T') \ .
\end{equation*}
\end{enumerate}
\end{proposition}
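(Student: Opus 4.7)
The strategy is to reduce the statement to the classical characterization of matroids by their rank function, using the cryptomorphism from Proposition \ref{prop_extendedmatroid}. Concretely, I would establish the translation formula
\begin{equation*}
r_\sfA(S,T) \;=\; \rho_{\widehat{\sfA}}\bigl(S^c \sqcup T\bigr) \;-\; |S^c|,
\end{equation*}
where $\rho_{\widehat{\sfA}}$ denotes the rank function of the extended matroid and $S^c = E \setminus S$. Conversely, given any $r\colon 2^E\times 2^F\rightarrow \Z_{\geq 0}$ satisfying (1)-(3), I would define a candidate rank function $\rho$ on $2^{E\sqcup F}$ by
\begin{equation*}
\rho(A) \;:=\; r\bigl(E\setminus(A\cap E),\, A\cap F\bigr) \;+\; |A\cap E|
\end{equation*}
and verify that $\rho$ is the rank function of a matroid on $E\sqcup F$ with $E$ as a basis; then Proposition \ref{prop_extendedmatroid} produces the required bimatroid, and one checks that its relative rank function is $r$.

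For the forward direction, translating $(I,J)\leftrightarrow I^c\sqcup J$ shows that a regular $d$-minor with $I\subseteq S$ and $J\subseteq T$ is the same thing as a basis $B$ of $\widehat{\sfA}$ with $S^c \subseteq B\cap E$, $B\cap F \subseteq T$, and $|B\cap F|=d$. The inequality $r_\sfA(S,T)\leq \rho_{\widehat{\sfA}}(S^c\sqcup T)-|S^c|$ is then immediate because $B\cap(S^c\sqcup T)=S^c\sqcup(B\cap F)$ is independent of size $|S^c|+d$. The converse inequality is the only genuine step: starting from a maximal independent subset of $S^c\sqcup T$ that contains $S^c$ (which exists since $S^c\subseteq E$ is independent and by the standard augmentation property), write it as $S^c\sqcup J$. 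Because $E$ is itself a basis of $\widehat{\sfA}$, the set $E\sqcup J$ already spans, so one can extend $S^c\sqcup J$ to a basis $B$ of $\widehat{\sfA}$ using only elements of $E$; this forces $B\cap F=J$ and yields a regular minor of size $|J|=\rho_{\widehat{\sfA}}(S^c\sqcup T)-|S^c|$ inside $S\times T$. Once the formula is in hand, axioms (1)-(3) for $r_\sfA$ follow routinely: (1) is the rank bound in the matroid, (2) is the unit-step behavior of $\rho_{\widehat{\sfA}}$ upon adding a single element, and (3) is a direct substitution of the submodularity inequality $\rho(X)+\rho(Y)\geq \rho(X\cup Y)+\rho(X\cap Y)$ applied to $X=S^c\sqcup T$ and $Y=S'^c\sqcup T'$, using $X\cup Y=(S\cap S')^c\sqcup(T\cup T')$ and $X\cap Y=(S\cup S')^c\sqcup(T\cap T')$.

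For the reverse direction, one verifies the classical matroid rank axioms for $\rho$: normalization $\rho(\emptyset)=r(E,\emptyset)=0$ (by axiom (1)), unit-increment monotonicity from axiom (2) (checking separately whether the added element lies in $E$ or in $F$), and submodularity from axiom (3) combined with the identity $|A_E|+|B_E|=|A_E\cup B_E|+|A_E\cap B_E|$. The values $\rho(E)=|E|=\rho(E\sqcup F)$ imply that $E$ is a basis, so Proposition \ref{prop_extendedmatroid} yields a bimatroid $\sfA$; a last short check shows $r_\sfA=r$. The main obstacle I anticipate is the converse inequality in the translation formula, specifically arranging the extension of $S^c\sqcup J$ to a basis of $\widehat{\sfA}$ without inadvertently picking up elements of $F\setminus T$; exploiting that $E$ is already a spanning set of $\widehat{\sfA}$ is the key trick that makes this step work cleanly.
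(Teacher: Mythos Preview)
Your proposal is correct and follows essentially the same approach as the paper: the paper's proof also reduces to the cryptomorphism with the extended matroid via the identical translation formula $r(S,T)=\widehat{r}(S^c\sqcup T)-|S^c|$ and then appeals to the standard rank-function characterization of matroids. You have simply spelled out more of the details (the proof of the translation formula and the explicit axiom checks) that the paper leaves implicit.
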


\begin{proof}
Denote by $\widehat{r}$ the rank function of the extended matroid $\widehat{\sfA}$ on $E\sqcup F$. Then we have
\begin{equation*}
r(S,T)=\widehat{r}(S^c\sqcup T)-\vert S^c\vert 
\end{equation*}
for all $(S,T)\in  2^E\times 2^F$. The cryptomorphism here now follows from the cryptomorphic definition of matroids by their rank function. 
\end{proof}

\begin{remark}
    In \cite[Section 5]{Kung_bimatroids} Kung uses the normalization $r(\emptyset,\emptyset)=0$ instead of the axiom $r(S, T) \leq \min\big\{ |S|, |T| \big\}$. We believe that this is not correct, since otherwise the function $r(S,T)=\vert S\vert +\vert T\vert$ for $(S,T)\in 2^E\times 2^F$ would be the relative rank function of a bimatroid which does not match with the idea that the relative rank should be the size of the largest regular minor in $(S, T)$. To avoid this issue, we follow \cite[(alternative) Definition 2.2]{Schrijver_linkingsystems}.
\end{remark}

\begin{example}[Transpose of a bimatroid]
Let $E$ and $F$ be finite sets and $\sfA$ a bimatroid on $E\times F$. The \emph{transpose} $\sfA^T$ of $\sfA$ is a bimatroid on the ground set $F\times E$; a pair $(J,I)\in{F\choose\ast}\times{E\choose \ast}$ is a regular minor of $\sfA^T$ if and only if $(I, J)$ is a regular minor of $\sfA$. 
The relative rank function of $\sfA^T$ is given by 
\begin{equation*}
r_{\sfA^T}(T,S)=r_{\sfA}(S,T)
\end{equation*}
for $S\times T\subseteq E\times F$. One way to see that $\sfA^T$ is indeed a bimatroid is to observe that the associated extended matroid $\widehat{\sfA}^T$ is the dual matroid to $\widehat\sfA$. In terms of rank functions, this can be deduced from
\begin{equation*}\begin{split}
    \widehat{r}_{\sfA^T}(T^c\sqcup S)&= r_{\sfA^T}(T,S) + \vert T^c\vert\\
    &=r_{\sfA}(S,T) + \vert S^c\vert +\vert S\vert -\vert E\vert +\vert T^c\vert\\
    &=\widehat r_{\sfA}(S^c\sqcup T) + \big(\vert S\vert +\vert T^c\vert\big) - \vert E\vert  
\end{split}\end{equation*}
for every $(S,T)\in 2^E\times 2^F$ and the observation that the right hand side is the rank function of the matroid dual to $\widehat{\sfA}$.
\end{example}


\subsection{Regular rectangles}\label{section_regularrectangles}
Let $E$ and $F$ be finite sets and $\sfA$ a bimatroid on $E\times F$. Given two subsets $S\subseteq E$ and $T\subseteq F$, we say that the pair $(S,T)$ is a \emph{regular rectangular minor} (for short a \emph{regular rectangle}), if $r_\sfA(S,T)=\min\big\{\vert S\vert,\vert T\vert\big\}$ or, equivalently, if there are  $I\subseteq S$ and $J\subseteq T$ such that $\vert I\vert =\vert J\vert=\min\big\{\vert S\vert,\vert T\vert\big\}$ and $(I,J)\in\calR(\sfA)$. 
We call a pair $(S,T)\in 2^E\times 2^F$ a \emph{vertical regular rectangle}, if $r_\sfA(S,T)=\vert T\vert\leq \vert S\vert$, and a \emph{horizontal regular rectangle}, if $r_\sfA(S,T)=\vert S\vert\leq \vert T\vert$. 
We write $\calR\calR(\sfA)$ for the set of regular rectangles and denote the subsets of vertical and horizontal rectangles by $\calR\calR^\updownarrow(\sfA)$ and $\calR\calR^{\leftrightarrow}(\sfA)$, respectively.

Regular rectangles can be used to give another cryptomorphic description of a bimatroid $\sfA$. 

\begin{proposition}\label{prop_regularrectangle}
Let $E$ and $F$ be finite sets and $\sfA$ a bimatroid on the ground set $E\times F$. The association $(S,T)\mapsto S^c\sqcup T$ defined in \eqref{eq_fundamentalbijection} induces a bijection between $\calR\calR^{\updownarrow}(\sfA)$ and the independent sets of $\widehat{A}$.
\end{proposition}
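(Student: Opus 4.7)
The plan is to reduce everything to the rank formula from the proof of Proposition \ref{prop_relativerank}. Recall that, for every pair $(S,T)\in 2^E\times 2^F$, the relative rank of $\sfA$ is related to the rank function $\widehat{r}$ of the extended matroid $\widehat{\sfA}$ by
\[
    r_{\sfA}(S,T) \;=\; \widehat{r}(S^c\sqcup T) - \vert S^c\vert .
\]
The association $(S,T)\mapsto S^c\sqcup T$ is clearly a bijection between $2^E\times 2^F$ and $2^{E\sqcup F}$, so it suffices to check that, under this bijection, $\calR\calR^\updownarrow(\sfA)$ corresponds precisely to the collection of independent sets of $\widehat{\sfA}$.

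For the forward direction, I would start with a vertical regular rectangle $(S,T)$, meaning $r_\sfA(S,T)=\vert T\vert \le \vert S\vert$. Plugging into the rank formula gives
\[
    \widehat{r}(S^c\sqcup T) \;=\; \vert S^c\vert + \vert T\vert \;=\; \vert S^c\sqcup T\vert,
\]
so $S^c\sqcup T$ is independent in $\widehat{\sfA}$. Conversely, assume $U=S^c\sqcup T$ is independent in $\widehat{\sfA}$, i.e.\ $\widehat{r}(U)=\vert U\vert$. By the rank formula this rewrites as $r_\sfA(S,T)=\vert T\vert$. Since $E$ is a basis of $\widehat{\sfA}$ (by Proposition \ref{prop_extendedmatroid}), the rank of $\widehat{\sfA}$ equals $\vert E\vert$, so every independent set has size at most $\vert E\vert$; this yields $\vert S^c\vert +\vert T\vert \le \vert E\vert$, hence $\vert T\vert \le \vert S\vert$. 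Thus $(S,T)$ is a vertical regular rectangle, completing the equivalence.

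The argument is really just bookkeeping on top of the rank identity; the only step that requires a moment's thought is using that $E$ is a basis of $\widehat{\sfA}$ to automatically upgrade the equation $r_\sfA(S,T)=\vert T\vert$ to the size inequality $\vert T\vert \le \vert S\vert$, so that the vertical constraint comes for free from independence and nothing additional needs to be checked.
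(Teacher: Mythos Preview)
Your argument is correct and is in fact cleaner than the paper's own proof. The key difference is the tool you use: you invoke the rank identity $r_{\sfA}(S,T)=\widehat{r}(S^c\sqcup T)-\vert S^c\vert$ from the proof of Proposition~\ref{prop_relativerank} and reduce the statement to a two-line computation, whereas the paper argues directly at the level of bases and regular minors. Concretely, the paper observes that $S^c\sqcup T$ is independent in $\widehat{\sfA}$ precisely when it is contained in a basis $I^c\sqcup J$, i.e., when there exist $I\subseteq S$ and $T\subseteq J$ with $(I,J)\in\calR(\sfA)$; it then uses the Laplace expansion (Lemma~\ref{lemma_Laplaceexpansion}) to successively shrink $J$ down to $T$, producing a regular minor $(I',T)$ with $I'\subseteq S$ and thereby exhibiting $(S,T)$ as a vertical regular rectangle. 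Your rank-function route sidesteps the need for Lemma~\ref{lemma_Laplaceexpansion} entirely and makes the size inequality $\vert T\vert\le\vert S\vert$ fall out of the rank bound $\widehat{r}\le\vert E\vert$ rather than being built into the domain of the bijection from the start. The paper's argument has the virtue of being self-contained at the level of the regular-minor axioms and of making the combinatorial content (shrinking via Laplace) explicit; yours has the virtue of brevity and of showing that once the rank identity is in hand, nothing further is needed.
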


Proposition \ref{prop_regularrectangle} immediately implies the following cryptomorphic characterization of bimatroids.

\begin{corollary}
Let $E$ and $F$ be finite sets. A subset $\calR\calR^\updownarrow$ of $2^E\times 2^F$ is the set of vertical regular rectangles of a bimatroid $\sfA$ on $E\times F$ if and only if the following axioms hold:
\begin{enumerate}
\item $(\emptyset, \emptyset) \in \calR\calR^\updownarrow$
\item For all $(S,T)\in \calR\calR^\updownarrow$ we have $\vert T\vert \leq \vert S\vert$. 
\item Given $(S,T)\in \calR\calR^\updownarrow$ as well as $S\subseteq S'\subseteq E$ and $T'\subseteq T$, the pair $(S',T')$ is also in $\calR\calR^\updownarrow$.
\item Suppose we are given $(S,T), (S',T')\in \calR\calR^\updownarrow$ with $\vert T\vert -\vert S\vert>\vert T'\vert -\vert S'\vert$ there is
\begin{itemize}
\item $s'\in S'-S$ such that $(S'-\{s'\})\times T'\in \calR\calR^\updownarrow$ or
\item $t\in T-T'$ such that $S'\times (T'\cup\{t\})\in \calR\calR^\updownarrow$.
\end{itemize}
\end{enumerate}
\end{corollary}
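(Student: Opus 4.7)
The plan is to deduce this corollary directly from Proposition~\ref{prop_regularrectangle}, combined with the standard independent-set axioms for matroids and Proposition~\ref{prop_extendedmatroid}. Under the bijection $(S,T)\mapsto S^c\sqcup T$, the condition that $S\subseteq S'$ and $T'\subseteq T$ translates to the subset relation $(S')^c\sqcup T'\subseteq S^c\sqcup T$ in $2^{E\sqcup F}$, and the image of $(S,T)$ has cardinality $|E|-|S|+|T|$. I therefore expect each of the four axioms on $\calR\calR^\updownarrow$ to correspond under this bijection to one of the familiar axioms characterising the independent sets of a matroid $\widehat{\sfA}$ on $E\sqcup F$ in which $E$ is a basis.

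For the forward direction, Proposition~\ref{prop_regularrectangle} identifies $\calR\calR^\updownarrow(\sfA)$ with the set $\calI(\widehat{\sfA})$ of independent sets. Axiom (1) then states that $E=\emptyset^c\sqcup\emptyset$ is independent, which holds since $E$ is a basis of $\widehat{\sfA}$; axiom (2) becomes the bound $|S^c\sqcup T|\leq |E|$, which encodes that the rank of $\widehat{\sfA}$ equals $|E|$; axiom (3) is the hereditary axiom (I2), noting that every subset of $S^c\sqcup T$ is of the form $(S')^c\sqcup T'$ with $S\subseteq S'\subseteq E$ and $T'\subseteq T$; and axiom (4) is the augmentation axiom (I3). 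For the last correspondence, set $A=S^c\sqcup T$ and $B=(S')^c\sqcup T'$: the hypothesis $|T|-|S|>|T'|-|S'|$ is precisely $|A|>|B|$, while $(A-B)\cap E=S'-S$ and $(A-B)\cap F=T-T'$, so the two bullet points of axiom (4) together range over every possible augmenting element $x\in A-B$, and either conclusion asserts that $B\cup\{x\}$ is independent.

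For the converse, given $\calR\calR^\updownarrow$ satisfying the four axioms, I would show that its image in $2^{E\sqcup F}$ satisfies (I1)--(I3): axioms (1) and (3) give (I1) by taking $(S',T')=(E,\emptyset)$ in (3), axiom (3) itself gives (I2), and axiom (4) gives (I3) via the translation above. Thus this image is the set of independent sets of a matroid $\widehat{\sfA}$ on $E\sqcup F$; axioms (1) and (2) moreover guarantee that $E$ is an independent set of maximal possible cardinality, hence a basis of $\widehat{\sfA}$. Proposition~\ref{prop_extendedmatroid} then produces a bimatroid $\sfA$ on $E\times F$, and Proposition~\ref{prop_regularrectangle} identifies $\calR\calR^\updownarrow(\sfA)$ with the original $\calR\calR^\updownarrow$. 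The only non-routine point in this plan is verifying that axiom (4) is equivalent to (I3); this amounts to the book-keeping of the previous paragraph, and beyond it I do not anticipate any serious obstacle.
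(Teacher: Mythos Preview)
Your proposal is correct and follows exactly the approach taken in the paper: both argue that the bijection $(S,T)\mapsto S^c\sqcup T$ of Proposition~\ref{prop_regularrectangle} translates axioms (1)--(4) into the independent-set axioms for a matroid $\widehat{\sfA}$ on $E\sqcup F$ in which $E$ is a basis, and then invoke Proposition~\ref{prop_extendedmatroid}. Your write-up is simply a more detailed unpacking of the two-sentence proof in the paper, which notes only that all subsets of $E$ are independent in $\widehat{\sfA}$ and that the axioms are ``mere translations'' under the bijection.
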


\begin{proof}
It is enough to note that all subsets of $E$ are always independent in $\widehat{\sfA}$. The axioms are mere translations of the independent set axioms of $\widehat{\sfA}$ using the bijection in Proposition \ref{prop_regularrectangle}. 
\end{proof}

In the proof of Proposition \ref{prop_regularrectangle} we will make use of the following bimatroidal generalization of the Laplace expansion for matrices. 

\begin{lemma}[Laplace expansion]\label{lemma_Laplaceexpansion} Let $E$ and $F$ be finite sets and $\sfA$ a bimatroid on $E\times F$. Consider a regular minor $(I,J)\in\calR(\sfA)$.
\begin{enumerate}
\item For every $j\in J$ there is $i\in I$ such that  $\big(I-\{i\}, J-\{j\}\big)$ is a regular minor of $\sfA$. 
\item For every $i\in I$ there is $j\in J$ such that  $\big(I-\{i\}, J-\{j\}\big)$ is a regular minor of $\sfA$. 
\end{enumerate}
\end{lemma}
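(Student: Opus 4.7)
The plan is to prove both statements simultaneously by translating to the extended matroid $\widehat{\sfA}$ via Proposition \ref{prop_extendedmatroid} and then invoking the basis exchange axiom. Under the bijection \eqref{eq_fundamentalbijection}, the regular minor $(I,J)$ corresponds to the basis $B_1 := I^c \sqcup J$ of $\widehat{\sfA}$, while $B_2 := E$ is also a basis of $\widehat{\sfA}$ by Proposition \ref{prop_extendedmatroid}. Since $E$ and $F$ are treated as disjoint in $E \sqcup F$, we have
\begin{equation*}
B_1 - B_2 = J \quad \text{and} \quad B_2 - B_1 = I.
\end{equation*}
Moreover, under the same bijection, a pair $(I - \{i\}, J - \{j\})$ with $i \in I$ and $j \in J$ corresponds to
\begin{equation*}
(I - \{i\})^c \sqcup (J - \{j\}) = \bigl(I^c \cup \{i\}\bigr) \sqcup (J - \{j\}) = (B_1 - \{j\}) \cup \{i\}.
\end{equation*}
Thus, in both parts, showing that $(I - \{i\}, J - \{j\})$ is a regular minor of $\sfA$ amounts to showing that $(B_1 - \{j\}) \cup \{i\}$ is a basis of $\widehat{\sfA}$.

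For part (i), I apply the standard basis exchange axiom in its \emph{removal form}: given the bases $B_1, B_2$ and the element $j \in B_1 - B_2 = J$, there exists $y \in B_2 - B_1 = I$ such that $(B_1 - \{j\}) \cup \{y\}$ is a basis. Setting $i := y$ yields the desired regular minor.

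For part (ii), I apply the \emph{insertion form} of basis exchange (which is equivalent to the removal form, e.g., via the symmetric basis exchange theorem or by applying the removal form to the dual matroid): given the element $i \in B_2 - B_1 = I$, there exists $x \in B_1 - B_2 = J$ such that $(B_1 - \{x\}) \cup \{i\}$ is a basis of $\widehat{\sfA}$. Setting $j := x$ yields the regular minor. Since neither step requires any delicate computation once the translation to $\widehat{\sfA}$ is in place, I do not expect a real obstacle; the only point to be careful about is to use the correct direction of basis exchange in each case, which is why both forms (equivalently, the symmetric basis exchange theorem) are invoked.
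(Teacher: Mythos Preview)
Your proof is correct and is essentially the same argument as the paper's, just phrased in the language of the extended matroid rather than directly via the bimatroid axioms: the second basis $B_2=E$ corresponds under \eqref{eq_fundamentalbijection} exactly to the regular minor $(\emptyset,\emptyset)$ that the paper plugs into Definition~\ref{def_regularminors} Axiom~(2)(b), and the bimatroid exchange axioms are, by the remark after Proposition~\ref{prop_extendedmatroid}, precisely the basis exchange axioms of $\widehat{\sfA}$. The one cosmetic difference is in part~(2): the paper argues via the transpose $\sfA^T$, whereas you invoke the insertion form of basis exchange; since $\widehat{\sfA^T}$ is the dual of $\widehat{\sfA}$, these two moves amount to the same thing.
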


\begin{proof}
Part (1) immediately follows from Defintion \ref{def_regularminors} Axiom (2) (b) applied with $(I',J')=(\emptyset,\emptyset)$. Part (2) follows with the same argument applied to $\sfA^T$.
\end{proof}

\begin{proof}[Proof of Proposition \ref{prop_regularrectangle}]
Note, that the association $(S,T)\mapsto S^c\sqcup T$ defines a natural bijection
\begin{equation*}
\big\{(S,T)\in 2^E\times 2^F\ \big\vert\ \vert S\vert\geq \vert T\vert\big\}\longrightarrow {E\sqcup F\choose \leq\vert E\vert} \ .
\end{equation*}
We now restrict this map to $\calR\calR^{\updownarrow}$. Observe that the image of the restricted map is precisely the set of independent subsets of $\widehat{\sfA}$. An independent set of $\widehat{\sfA}$ corresponds to a pair $(S,T)\in 2^E\times 2^F$ with $\vert S\vert \geq \vert T\vert $ such that there are $I\subseteq S$ and $T\subseteq J\subseteq F$ with $(I,J)\in\calR(\sfA)$. If $I=S$ or $J=T$, this is already a regular rectangle. If not, we may apply the Laplacian expansion from Lemma \ref{lemma_Laplaceexpansion} above (possibly several times) and find $I'\subseteq I$ such that $(I',T)\in\calR(\sfA)$.
\end{proof}

Working with $\sfA^T$ instead of $\sfA$ we get a similar axiomatic characterization of the horizontal regular rectangles of a bimatroid. We leave the details of this reformulation to the avid reader.



\section{Products of bimatroids and morphisms of matroids} 

In this section we recall the construction of  products of bimatroids from \cite{Kung_bimatroids, Schrijver_linkingsystems} as well as the basic terminology of morphisms of matroids (following \cite{EurHuh}). We refer the reader to \cite[Chapter 17]{Welsh_matroidtheory}, to  \cite{Kung_strongmaps}, and the recent categorical exploration in \cite{HeunenPatta} for more background on this notion. Section \ref{section_basesofmorphisms} contains the central construction for the proof of Theorem \ref{mainthm_logconcavemorphism} in the next section.

\subsection{Products of bimatroids}
The central new feature of bimatroids is that, just like for matrices, but unlike for matroids, one can form products. In order to motivate this construction we recall the generalized \emph{Cauchy--Binet formula}. Let $E$, $F$, and $G$ be finite sets and $\mathbb{K}$ be a field. Given two matrices $A\in \mathbb{K}^{E\times F}$ and $B\in \mathbb{K}^{F\times G}$ and a pair $(I,K)\in{E\choose \ast}\times{G\choose \ast}$, we have
\begin{equation*}
\det [A\cdot B]_{I,K}=\sum_{J\in{ F\choose \vert I\vert}} \pm \det [A]_{I,J}\cdot \det [B]_{J,K} \ .
\end{equation*}

The Cauchy--Binet formula tells us that a minor $[A\cdot B]_{I,K}$ can only be regular if there is $J\in{ F\choose \vert I\vert}$ such that both $[A]_{I,J}$ and $[B]_{J,K}$ are regular as well.

\begin{proposition}\label{prop_product=bimatroid}
Let $E$, $F$, and $G$ be finite sets and let $\sfA$ and $\sfB$ be bimatroids on the grounds sets $E\times F$ and $F\times G$, respectively. Then the set $\calR(\sfA\cdot\sfB)$ of those $(I,K)\in {E\choose \ast}\times{G\choose\ast}$, for which there is $J\in {F\choose \ast}$ such that both $(I, J)\in\calR(\sfA)$ and $(J, K)\in\calR(\sfB)$, defines the set of regular minors of a bimatroid $\sfA\cdot \sfB$ on the ground set $E\times G$. 
\end{proposition}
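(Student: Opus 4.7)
The plan is to use the extended matroid characterization of Proposition \ref{prop_extendedmatroid} and verify that the natural candidate set of bases on $E \sqcup G$ does indeed form a matroid. Concretely, I would define
\begin{equation*}
\calB \;:=\; \bigl\{ I^c \sqcup K \subseteq E \sqcup G \,:\, (I,K) \in \calR(\sfA \cdot \sfB) \bigr\}.
\end{equation*}
Any $F$-witness $J$ forces $|I| = |J| = |K|$, so every element of $\calB$ has cardinality $|E|$, and the pair $(\emptyset, \emptyset)$ with witness $\emptyset$ shows $E \in \calB$. This also immediately yields Axiom (1) of Definition \ref{def_regularminors}, so the task reduces to verifying basis exchange for $\calB$, which is precisely the content of Axiom (2).

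To establish basis exchange, take $B_\nu = I_\nu^c \sqcup K_\nu \in \calB$ with witnesses $J_\nu \subseteq F$, so that $I_\nu^c \sqcup J_\nu$ is a basis of $\widehat{\sfA}$ and $J_\nu^c \sqcup K_\nu$ is a basis of $\widehat{\sfB}$ for $\nu = 1, 2$. Choose $x \in B_1 - B_2$; up to replacing the argument by its transpose, I may assume $x \in E$, so $x \in I_2 - I_1$. I would first apply basis exchange in $\widehat{\sfA}$ to $I_1^c \sqcup J_1$ and $I_2^c \sqcup J_2$ at $x$, yielding an exchange element $z$. If $z \in I_1 - I_2$, the swapped pair keeps $J_1$ as a witness and immediately lies in $\calB$. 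If $z \in J_2 - J_1$, then $(I_1 \cup \{x\}, J_1 \cup \{z\}) \in \calR(\sfA)$, and I would feed $z$ into basis exchange on $\widehat{\sfB}$ applied to $J_1^c \sqcup K_1$ and $J_2^c \sqcup K_2$, extracting the required $y \in K_2 - K_1$ via the witness $J_1 \cup \{z\}$.

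The main obstacle is that this second exchange can return $w \in J_1 - J_2$ rather than an element of $K_2 - K_1$, looping back into $F$. Overcoming it requires either iterating while tracking a monovariant (for instance, the symmetric difference of the current witness with $J_2$ strictly shrinks, forcing termination) or upgrading to the strong/symmetric exchange form of the matroid axioms to pick the exchange element with values in $G$ from the outset. A cleaner but less direct route, and the one I would take when writing the proof in full, is to bypass the case analysis via the relative-rank cryptomorphism of Proposition \ref{prop_relativerank}: define
\begin{equation*}
r_{\sfA \cdot \sfB}(S, U) \;:=\; \max\bigl\{ |J| : J \subseteq F,\ r_\sfA(S, J) = |J| = r_\sfB(J, U) \bigr\},
\end{equation*}
and verify the range, monotonicity, and submodularity axioms from the analogous properties of $r_\sfA$ and $r_\sfB$, with the matroid intersection theorem handling the submodular inequality. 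This is the route taken in Kung's original treatment \cite{Kung_bimatroids}, and it has the virtue of reducing the combinatorial bookkeeping to two applications of classical matroid facts.
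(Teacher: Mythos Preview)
Your proposal is sound and lands on the same route the paper points to: the paper does not spell out a proof but refers to Kung \cite{Kung_bimatroids} and Schrijver \cite{Schrijver_linkingsystems} for exactly the rank-function argument via the matroid intersection theorem that you outline in your final paragraph. Your honest assessment of the direct basis-exchange attempt is accurate --- the ``loop back into $F$'' phenomenon is the genuine obstacle, and while a monovariant argument can be made to work, it is fiddly enough that the rank-function route is preferable.

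One remark worth making: the paper also records a second, structurally cleaner alternative (due to Frenk) that you do not mention, namely the identity
\[
\widehat{\sfA\cdot \sfB} \;=\; \Big(\widehat{\sfA}\oplus \mathsf{0}_G \,\vee\, \mathsf{0}_E\oplus \widehat{\sfB} \Big)\big/_F
\]
expressing the extended matroid of the product as a matroid union followed by contraction along $F$. This buys you the result as a one-line corollary of standard matroid constructions (direct sum, union, contraction all yield matroids), entirely sidestepping both the exchange bookkeeping and the explicit verification of submodularity. If you want the shortest complete proof, this is it; if you want the most transparent connection to linear algebra (Cauchy--Binet), your rank-function approach is the right one.
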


The bimatroid $\sfA\cdot\sfB$ is called the \emph{product} of $\sfA$ and $\sfB$. A proof of Proposition \ref{prop_product=bimatroid} via rank functions, which uses the matroid intersection theorem, can be found in \cite[Section 6]{Kung_bimatroids} and \cite[Theorem 3.5]{Schrijver_linkingsystems}. Alternatively, following Frenk's thesis \cite[Prop. 4.2.21]{Frenk_thesis}, we can also observe that the extended matroid $\widehat{\sfA\cdot \sfB}$ of $\sfA\cdot \sfB$ is given by
\begin{equation*}
\widehat{\sfA\cdot \sfB} = \Big(\widehat{\sfA}\oplus \mathsf{0}_G\vee \mathsf{0}_E\oplus \widehat{\sfB} \Big)\big/_F\ ,
\end{equation*}
where we write $\mathsf{0}_E$ and $\mathsf{0}_G$ for the trivial matroid on the ground sets $E$ and $G$, respectively, the symbols $\oplus$ and $\vee$ stand for the direct sum and the union of matroids, respectively (see \cite[Section 8.3]{Welsh_matroidtheory} for further details on the latter), and $\big/_F$ denotes the contraction of a matroid along $F$. 

\begin{example}
    Let $E$, $F$, and $G$ be finite sets and let $R$ be a relation between $E$ and $F$ and $S$ a relation between $F$ and $G$. Recall (e.g., from \cite[Definition 0.5]{HeunenVicary_textbook}) that the \emph{product} $R\cdot S$ is defined by 
    \begin{equation*}
    R\cdot S=\big\{(e,g)\in E\times G\ \big\vert\ \textrm{there is } f\in F \textrm{ such that } (e,f)\in R \textrm{ and } (f,g)\in S\big\} \ .
    \end{equation*}
    This product is compatible with the product of bimatroids, i.e., we have 
    \begin{equation*}
    [R\cdot S]=[R]\cdot [S] \ .
    \end{equation*}
    In particular, given two maps $\phi\colon F\rightarrow E$ and $\psi\colon G\rightarrow F$, we have 
    \begin{equation*}
    [\phi\circ \psi]=[\phi]\cdot [\psi] \ .
    \end{equation*}
\end{example}

\begin{example}
    Let $E$ and $F$ be finite sets and $\sfA$ a bimatroid on the ground set $E\times F$. Given subsets $E'\subseteq E$ and $F'\subseteq F$, we write $i\colon E'\hookrightarrow E$ and $j\colon F'\hookrightarrow F$ for the inclusion maps. Then the \emph{restriction} $\sfA\vert_{E'\times F'}$ of $\sfA$ to $E'\times F'$ is defined by
    \begin{equation*}
    \sfA\vert_{E'\times F'}=[i]^T\cdot \sfA\cdot [j]\in \BMat^{E'\times F'} \ .
    \end{equation*}
    According to this definition, an element $(I,J)\in{E'\choose \ast}\times{F'\choose \ast}$ is a regular minor of $\sfA\vert_{E'\times F'}$ if and only if it is a regular minor of $\sfA$. 
\end{example}

The avid reader may now immediately verify that bimatroids naturally form a category $\mathbf{BMat}$ whose objects are finite sets and in which morphisms $F\rightarrow E$ (for finite sets $E$ and $F$) are  bimatroids on the ground set $E\times F$ (with products as composition). 
Explicitly this means the following:
\begin{itemize}
\item The product is associative: For all bimatroids $\sfA\in\BMat^{E\times F}$,  $\sfB\in\BMat^{F\times G}$, and $\sfC\in\BMat^{G\times H}$ and finite sets $E$, $F$, $G$, and $H$ we have
    \begin{equation*}
        (\sfA\cdot\sfB)\cdot \sfC=\sfA\cdot (\sfB\cdot\sfC) \ .
    \end{equation*}
    \item The bimatroids $\sfI_E=[\id_E]$ (for a finite set $E$) serve as identity morphisms: For all bimatroids $\sfA\in\BMat^{E\times F}$ we have 
    \begin{equation*}
        \sfI_E\cdot\sfA=\sfA=\sfA\cdot I_F \ . 
    \end{equation*}
\end{itemize}

\begin{remark}
Taking transpose naturally endows the category $\mathbf{BMat}$ with a dagger structure (see \cite[Section 2.3]{HeunenVicary_textbook} for details on this notion). In our situation this means the following: Given finite sets $E$, $F$, and $G$, we have $\sfI_E^T=\sfI_E$ as well as $(\sfA^T)^T=\sfA$ and $(\sfA\cdot \sfB)^T=\sfB^T\cdot \sfA^T$ for all bimatroids $\sfA\in\BMat^{E\times F}$ and $\sfB\in\BMat^{F\times G}$.
\end{remark}


\subsection{Morphisms of matroids}\label{section_morphisms}
Let $F$ and $F'$ be finite sets. Recall from \cite{EurHuh} that, given two matroids $\sfM$ and $\sfM'$ on $F$ and $F'$, respectively, a map $\phi\colon F\rightarrow F'$ is called a \emph{morphism} from $\sfM$ to $\sfM'$ if one (and therefore all) of the following three equivalent conditions hold:
\begin{itemize}
\item For all $T_1\subseteq T_2\subseteq F$, we have
\begin{equation*}
r_{\sfM'}\big(\phi(T_2)\big) - r_{\sfM'}\big(\phi(T_1)\big)\leq r_\sfM(T_2) - r_\sfM(T_1) \ . 
\end{equation*}
\item If $T'\subseteq F'$ is a cocircuit of $\sfM'$, then $\phi^{-1}(T')$ is a union of cocircuits of $\sfM$. 
\item If $T'\subseteq F'$ is a flat of $\sfM'$, then $\phi^{-1}(T')$ is a flat of $\sfM$. 
\end{itemize}

\begin{example}[Realizable morphisms]
Let $\mathbb{K}$ be a field. Let $\sfM$ and $\sfM'$ be matroids on finite ground sets $F$ and $F'$, realized by vectors $(v_s)_{s\in F}$ and $(v'_{s'})_{s'\in F'}$ spanning vector spaces $V$ and $V'$. Suppose further that we have a map $\phi\colon F\rightarrow F'$ and a $\mathbb{K}$-linear map $\Phi\colon V\rightarrow V'$ such that $\Phi(v_s)=v'_{\phi(s)}$ for all $s\in F$. Then $\phi$ defines a morphism of matroids. Morphisms of this type are called \emph{realizable} over $\KK$. 
\end{example}

Given two matroids $\sfM$ and $\sfN$ on the same ground set $F$, we say that $\sfN$ is a \emph{quotient} of $\sfM$, if the identity map $\id_F$ is a morphism of matroids from $\sfM$ to $\sfN$. Let $\phi\colon F\rightarrow  F'$ be a map of finite sets. For a matroid $\sfM'$ on $F'$, the \emph{pullback matroid} $\phi^\ast\sfM'$ is a matroid on $F$, whose rank function $r\colon 2^F\rightarrow\Z_{\geq 0}$ is given by
\begin{equation*}
r_{\phi^\ast\sfM'}(T)=r_{\sfM'}\big(\phi(T)\big)
\end{equation*}
for $T\subseteq F$. Recall that by \cite[Lemma 2.4]{EurHuh} a map $\phi\colon F\rightarrow F'$ is a morphism between matroids $\sfM$ and $\sfM'$ on $F$ and $F'$, respectively if and only if $\phi^\ast\sfM'$ is a quotient of $\sfM$. We call the difference of ranks
\begin{equation*}
\nul(\phi):=\rk(\sfM)-\rk(\phi^\ast\sfM')
\end{equation*}
the \emph{nullity} of $\phi$.

Following \cite[Definition 1.1]{EurHuh}, we say that a subset $T\subseteq F$ is a \emph{basis} of the morphism $\phi$ if it is contained in a basis of $\sfM$ and $\phi(T)$ contains a basis of $\sfM'$. By \cite[Lemma 2.4]{EurHuh} the set $\calB(\phi)$ of bases of $\phi$ is either empty, when $\phi(E)$ does not span $\sfM'$, or, otherwise, equal to the set of bases of the quotient $\sfM\rightarrow \phi^\ast \sfM'$. There are two extreme cases: When $\phi$ is the identity morphism, then $\calB(\phi)$ is the set of bases of $\sfM$ and, when $\sfM'$ is the uniform matroid $\sfM'=\sfU_{0,1}$, then $\calB(\phi)$ is the set $\calI(\sfM)$ of independent sets of the matroid $\sfM$. The last observation explains why Theorem \ref{mainthm_logconcavemorphism} implies Corollary \ref{maincor_weakMason}.

\subsection{Classifying bases of morphisms}\label{section_basesofmorphisms}
In \cite[Theorem 4]{Kung_bimatroids}, Kung characterizes morphisms of matroids in terms of bimatroid multiplication. This motivates the following construction.

\begin{proposition} \label{prop_basesofmorphismsasbimatroids}
Consider two finite sets $F$ and $F'$, two matroids $\sfM$ on $F$ and $\sfM'$ on $F'$, as well as a map $\phi\colon F\rightarrow F'$ that is a morphism of matroids such that $\phi(F)$ spans $\sfM'$. Write $\sfN$ for the pullback $\phi^\ast\sfM'$, so that $\sfN$ is a matroid on $F$ and a quotient of $\sfM$. Let $Q$ be a finite set of size $r=\rk\sfM$ and set $\widetilde{F}=Q\sqcup F$. Then the collection of $\widetilde{B}\subseteq \widetilde{F}$ such that $\vert\widetilde{B}\vert=\rk\sfM$ and $T=\widetilde{B}\cap F$ is a basis of $\phi$ is the set of bases of a matroid $\widetilde{\sfM}_\phi$ on $\widetilde{F}$. 
\end{proposition}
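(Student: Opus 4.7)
The plan is to directly verify that the collection $\widetilde{\calB}$ of proposed bases satisfies the matroid basis axioms. Non-emptiness is immediate: since $\phi(F)$ spans $\sfM'$, \cite[Lemma 2.4]{EurHuh} identifies $\calB(\phi)$ with the bases of the quotient $\sfM \twoheadrightarrow \sfN$ (where $\sfN = \phi^\ast \sfM'$), which is non-empty, and any $T \in \calB(\phi)$ may be completed to an element of $\widetilde{\calB}$ by adjoining an arbitrary $(r - |T|)$-subset of $Q$, where $r = \rk \sfM$.

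For the basis exchange axiom, take $\widetilde{B}_i = S_i \sqcup T_i \in \widetilde{\calB}$ for $i = 1, 2$ and $x \in \widetilde{B}_1 \setminus \widetilde{B}_2$. If $x \in S_1$ (so $x \in Q$), the exchange is either immediate—if $S_2 \setminus S_1 \neq \emptyset$, leaving $T_1$ unchanged—or else $|T_2| > |T_1|$ and ordinary matroid augmentation in $\sfM$ produces $y \in T_2 \setminus T_1$ with $T_1 \cup \{y\}$ independent in $\sfM$; since $T_1 \cup \{y\} \supseteq T_1$ still spans $\sfN$, it is again a basis of $\phi$. If $x \in T_1$ and $T_1 \setminus \{x\}$ still spans $\sfN$, a parallel analysis—using that $T_1 \setminus \{x\}$ is itself a basis of $\phi$—yields the exchange, again by matroid augmentation in $\sfM$ or by exchanging with an element of $S_2 \setminus S_1$.

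The delicate subcase is $x \in T_1 \setminus T_2$ with $r_\sfN(T_1 \setminus \{x\}) = k - 1$ where $k = \rk \sfN$, because dropping $x$ destroys $\sfN$-spanning and simple augmentation in $\sfM$ may fail when $|T_2| < |T_1|$. Here the crucial input is the quotient property: since $\sfN$ is a quotient of $\sfM$, every $\sfN$-flat is an $\sfM$-flat, and hence $\mathrm{cl}_\sfM(A) \subseteq \mathrm{cl}_\sfN(A)$ for every $A \subseteq F$. The codimension-one $\sfN$-closure $\mathrm{cl}_\sfN(T_1 \setminus \{x\})$ cannot contain $T_2$, which has $\sfN$-rank $k$, so there is some $y \in T_2 \setminus \mathrm{cl}_\sfN(T_1 \setminus \{x\})$. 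This $y$ automatically lies outside $\mathrm{cl}_\sfM(T_1 \setminus \{x\})$, so that $(T_1 \setminus \{x\}) \cup \{y\}$ is simultaneously independent in $\sfM$ and spanning in $\sfN$, hence a basis of $\phi$. Since $x \notin T_2$, we have $y \in T_2 \setminus T_1$, completing the exchange. The principal obstacle is precisely this subcase: simultaneous control of $\sfM$-independence and $\sfN$-spanning is not achievable by an exchange argument in a single matroid and fundamentally requires the closure comparison inherent to the quotient relation.
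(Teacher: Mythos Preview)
Your argument is correct and is genuinely different from the paper's. The paper proceeds indirectly: it invokes the Higgs factorization theorem to realize the quotient $\sfM\twoheadrightarrow\sfN$ as a restriction/contraction pair of a single matroid $\sfM_0$ on $F\sqcup Q_0$ with $\vert Q_0\vert=\nul(\phi)$, then builds an auxiliary matroid $\widetilde{\sfM}_0$ via a bond bimatroid multiplied by a relation, and finally runs a case analysis transferring basis exchange from $\widetilde{\sfM}_0$ to $\widetilde{\sfM}_\phi$. You instead verify basis exchange directly, and your key observation---that $\mathrm{cl}_\sfM\subseteq\mathrm{cl}_\sfN$ for a quotient, so any $y\in T_2\setminus\mathrm{cl}_\sfN(T_1\setminus\{x\})$ simultaneously restores $\sfN$-spanning and preserves $\sfM$-independence---cleanly dispatches the one subcase where naive augmentation in $\sfM$ alone could fail. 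Your route is shorter and avoids both Higgs lifts and bimatroid products; the paper's route, on the other hand, keeps the construction inside the bimatroid formalism that drives the rest of the article and makes the link to bond bimatroids explicit.
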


\begin{proof}
By the Higgs factorization theorem (see e.g., \cite[Theorem 8.2.8]{White_encyclopedia} or \cite[Section 7.3]{Oxley_matroids}), there is a finite set $F_0$ containing $F$ as well as a matroid $\sfM_0$ on $F_0$ such that 
\begin{equation*}
\sfM_0\vert_{F}=\sfM \qquad \textrm{ and } \qquad \sfM_0/Q_0=\sfN
\end{equation*}
where we denote the complement $F_0-F$ by $Q_0$. Here, as detailed in \cite[Lemma 7.3.3]{Oxley_matroids}, we may always choose $\sfM_0$ (and $F_0$) in such a way that $Q_0$ is an independent subset of $\sfM_0$ and $\rk(\sfM_0)=\rk(\sfM)$. 

Note that in this case $\vert Q_0\vert=\nul(\phi)$ and a subset $T\subseteq F$ is a basis of $\phi$ if and only there is a subset $S_0\subseteq Q_0$ such that $S_0\sqcup T$ is a basis of $\sfM_0$ (we  automatically have $\vert S_0\vert=\rk\sfM-\vert T\vert$ in this situation). To prove the last assertion:
\begin{itemize}
\item Let $T\subseteq F$ be a basis of $\phi$. Then $T$ contains a basis $B$ of $\sfN=\sfM_0/Q_0$. By \cite[Cor. 3.1.8]{Oxley_matroids}, this means that $Q_0\sqcup B$ is a basis of $\sfM_0$. Since $T$ is contained in a basis of $\sfM$, it is independent in $\sfM$ and, thus, also in $\sfM_0$. We may now use the axioms of an independent set to successively add elements in $Q_0\sqcup B -T=Q_0$ to $T$ until we find a subset $S_0\subseteq Q_0$ such that $S_0\sqcup T$ is a basis of $\sfM_0$.

\item Let $T\subseteq F$ and suppose that there is $S_0\subseteq Q_0$ such that $S_0\sqcup T$ is a basis of $\sfM_0$. Then $T$ is independent in $\sfM_0$ and, thus, also in $\sfM$. Hence $T$ is contained in a basis of $\sfM$. Moreover, by \cite[Prop. 3.1.6]{Oxley_matroids}, we have 
\begin{equation*}
\rk_{\sfN}(T)=\rk_{\sfM_0/Q_0}(T)=\rk_{\sfM_0}(Q_0\sqcup T)-\rk_{\sfM_0}(Q_0) \ .
\end{equation*}
Since $Q_0$ is independent, we have $\rk_{\sfM_0}(Q_0)=\vert Q_0\vert=\nul(\phi)$, and, since $S_0\sqcup T\subseteq Q_0\sqcup T$ is a basis of $\sfM_0$, we have $\rk_{\sfM_0}(Q_0\sqcup T)=\rk(\sfM_0)=\rk(\sfM)$. This implies $\rk_{\sfN}(T)=\rk(\sfM)-\nul(\phi)=\rk(\sfN)$ and so $T$ must contain a basis of $\sfN$. 
\end{itemize}

Choose a basis $B_0$ of $\sfM_0$ such that $Q_0\subseteq B_0$ and consider the bond bimatroid $\Bond_{B_0}(\sfM_0)$. Denote by $R$ the relation on $B_0\times B_0$ that matches a pair $(I_0,J_0)\in {B_0\choose \ast}\times {B_0\choose \ast}$ if and only if $\vert I_0\cap Q_0\vert=\vert J_0\cap Q_0\vert$ and $I_0-Q_0=J_0-Q_0$. This can be realized by the relation $R=Q_0\times Q_0\cup \big\{(s,s) \mathrel{\big\vert} s\in B_0-Q_0 \big\}\subseteq B_0\times B_0$. Then the extended matroid $\widetilde{\sfM}_0$ of the bimatroid
\begin{equation*}
[R]\cdot \Bond_{B_0}(\sfM_0)\vert_{B_0\times (F-B_0)}
\end{equation*}
has the property that a subset $T\subseteq F$ is a basis of $\phi$ if and only if $S_0\sqcup T$ is a basis of $\widetilde{\sfM}_0$ for every subset $S_0\subseteq Q_0$ of size $r-\vert T\vert$.

We now use the basis exchange property of $\widetilde{\sfM}_0$ in order to show that the basis exchange property for $\widetilde\sfM_\phi$ holds. We need to verify that, given two bases $U=S\sqcup T$ and $U'=S'\sqcup T'$ of $\widetilde{\sfM}_\phi$ (with $S,S'\subseteq Q$ and $T,T'\subseteq F$) and an element $x\in U-U'$, there is $x'\in U'-U$ such that $U-\{x\}\cup\{x'\}$ is also a basis of $\widetilde\sfM_\phi$.

Consider first the case that $x\in S$. If $S'-S$ is not empty, we may choose $x'\in S'-S$ and $U-\{x\}\cup\{x'\}$ is also a basis of $\widetilde\sfM_\phi$. 

So now suppose that $S'\subseteq S$. Since $\vert U\vert =\vert U'\vert =\rk \sfM$ and $\vert T\vert,\vert T'\vert\geq \rk\sfN$, we may choose subsets $S_0\subseteq Q_0$ and $S_0'\subseteq S_0$ with $\vert S_0\vert=\vert S\vert$ and $\vert S_0'\vert=\vert S'\vert$ as well as a bijection $\gamma\colon S\xrightarrow{\sim} S_0$, such that $\gamma(S')=S_0'$. Set $x_0=\gamma(x)\in S_0-S_0'$ as well as $U_0=S_0\sqcup T$ and $U_0'=S_0'\sqcup T'$. Then both $U_0$ and $U_0'$ are bases of $\widetilde{\sfM}_0$, since $T$ and $T'$ are bases of $\phi$. Moreover, we have $x_0\in U_0-U_0'$. So, the basis exchange property of $\widetilde{\sfM}_0$ provides us with $x'\in U_0'-U_0$ such that $U_0-\{x\}\cup\{x'\}$ is a basis of $\widetilde{\sfM}_0$. Since $S_0'\subseteq S_0$, we automatically have $x'\in T'-T$. This means that $T\cup\{x'\}$ is a basis of $\phi$ and, thus, $U-\{x\}\cup\{x'\}=S-\{x\}\sqcup T\cup\{x'\}$ is a basis of $\widetilde\sfM_\phi$. 

Consider now the case that $x\in T$. When $\vert S'\vert\leq \vert S\vert$, we may use again that $\vert U\vert =\vert U'\vert =\rk \sfM$ and $\vert T\vert,\vert T'\vert\geq \rk\sfN$ in order to choose subsets $S_0\subseteq Q_0$ and $S_0'\subseteq S_0$ with $\vert S_0\vert=\vert S\vert$ and $\vert S_0'\vert=\vert S'\vert$. Since $T$ and $T'$ are bases of $\phi$, we know that $U_0=S_0\sqcup T$ and $U_0'=S_0'\sqcup T'$ are bases of $\widetilde{\sfM}_0$. By the basis exchange property of $\widetilde{\sfM}_0$, there is $x'\in U_0'-U_0$ such that $U_0-\{x\}\cup\{x'\}$ is a basis of $\widetilde{\sfM}_0$. Since $S_0'\subseteq S_0$, we automatically have $x'\in T'-T$. This means that $T-\{x\}\cup\{x'\}$ is a basis of $\phi$ and so $U-\{x\}\cup\{x'\}$ is a basis of $\widetilde{\sfM}_\phi$. 

Finally, we need to deal with the case $\vert S'\vert>\vert S\vert$. Here we use again that $\vert U\vert =\vert U'\vert =\rk \sfM$ and $\vert T\vert,\vert T'\vert\geq \rk\sfN$ in order to be able to choose $S_0'\subseteq Q_0$ with $\vert S_0'\vert=\vert S'\vert$ and $S_0\subseteq Q_0'$ with $\vert S_0\vert=\vert S\vert$. Since $T$ and $T'$ are bases of $\phi$, we automatically have that $U_0=S_0\sqcup T$ and $U_0'=S_0'\sqcup T'$ are bases of $\widetilde{\sfM}_0$. The basis exchange property of $\widetilde{\sfM}_0$ provides us with $x_0'\in U_0'-U_0$ such that $U_0-\{x\}\cup\{x_0'\}$ is a basis of $\widetilde{\sfM}_0$. If $x_0'\in T'$, this means that $T-\{x\}\cup\{x_0'\}$ is also a basis of $\phi$. In this case we can choose $x'=x_0'$ and deduce that $U-\{x\}\cup\{x'\}=S\sqcup T-\{x\}\cup\{x_0'\}$ is a basis of $\widetilde{\sfM}_\phi$. If $x_0'\in S_0'$, we find that $T-\{x\}$ also has to be a basis of $\phi$. We now use $\vert S'\vert >\vert S\vert$ to be able to choose an element $x'\in S'-S$. Since $T-\{x\}$ is a basis of $\phi$, we have that $U-\{x\}\cup\{x'\}=S\cup\{x'\}\sqcup T-\{x\}$ is also a basis of $\widetilde{\sfM}_\phi$.
\end{proof}








\section{Lorentzian polynomials and logarithmic concavity}

We begin by recalling the definition and basic properties of \emph{Lorentzian polynomials} from \cite{BrandenHuh}. Let $n$ and $d$ be non-negative integers and write $\Delta_n^d=\big\{\alpha\in\Z_{\geq 0}^n \bigmid \vert\alpha\vert:=\alpha_1+\cdots + \alpha_n=d\}$. Denote by $H_n^d\subseteq \R[w_1,\ldots, w_n]$ the set of homogeneous polynomials with real coefficients of degree $d$ in $n$ variables $w_1, \ldots, w_n$. Whenever convenient, we write a polynomial $p(w)\in H_n^d$ in the variables $w=(w_1,\ldots, w_n)$ as 
\begin{equation*}
    p(w)=\sum_{\alpha\in\Delta_n^d}a_\alpha w^\alpha
\end{equation*}
using multi-index notation $w^\alpha=w_1^{\alpha_1}\cdots w_n^{\alpha_n}$ for $\alpha=(\alpha_1,\ldots, \alpha_n)\in \Z_{\geq 0}^n$. Following this notational logic, we also write $\partial^\alpha p$ for 
\begin{equation*}
\partial^\alpha p=\Big(\frac{\partial}{\partial w_1}\Big)^{\alpha_1}\cdots \Big(\frac{\partial}{\partial w_n}\Big)^{\alpha_n}p 
\end{equation*}
as well as $\alpha!=\alpha_1!\cdots\alpha_n!$.

Denote by $P_n^d$ the open subset of polynomials in $H_{n}^d$, for which all coefficients $a_\alpha$ are positive. 

\begin{definition}[{\cite[Definition 2.1]{BrandenHuh}}] Set $\mathring{L}_n^0=P_n^0$, $\mathring{L}_n^1=P_n^1$, as well as
\begin{equation*}
    \mathring{L}_n^2=\big\{p\in P_n^2 \bigmid \Hess(p) \textrm{ has the Lorentzian signature } (+,-,\ldots,-) \big\}
\end{equation*}
and
\begin{equation*}
\mathring{L}_n^d=\big\{p\in P_n^d \bigmid \partial^\alpha p\in \mathring{L}_n^2 \textrm{ for all }\alpha\in\Delta_n^{d-2} \big\} \ .
\end{equation*}
Polynomials in $\mathring{L}_n^d$ are called \emph{strictly Lorentzian} and we define the space $L_n^d$ of \emph{Lorentzian polynomials} as the closure of $\mathring{L}_n^d$ in $H_n^d$. 
\end{definition}

In the following, we write $e_1,\ldots, e_n$ for the standard basis vectors of $\Z^n$. Recall now, e.g., from \cite{Murota_discreteconvexanalysis}, that a subset $S\subseteq \Z_{\geq 0}^n$ is said to be \emph{M-convex}, if the following \emph{exchange property} holds: For any $\alpha, \beta\in S$ and any index $i$ satisfying $\alpha_i>\beta_i$, there is an index $j$ such that $\alpha_j<\beta_j$ and $\alpha-e_i+e_j\in S$. Observe that every M-convex set $S$ is automatically contained in $\Delta_n^d$ for some $d$ and that, if $S$ is also contained in $\{0,1\}^n$, the elements in $S$ are the indicator vectors of bases of a matroid $\sfM$; here the exchange property of $S$ is the same as the symmetric basis exchange property of $\sfM$.

Lorentzian polynomials enjoy the following useful properties: 
\begin{itemize}
\item A homogeneous polynomial $p(x,y)=\sum_{k=0}^d a_kx^ky^{d-k}$ in two variables with non-negative coefficients is Lorentzian if and only if the sequence $a_k$ is ultra log-concave and has no internal zeros (see \cite[Example 2.26]{BrandenHuh}). 
\item For every $p=\sum_{\alpha\in\Delta_n^d}a_\alpha w^\alpha\in L_n^d$ the \emph{support} 
\begin{equation*}
    \supp(p)= \big\{\alpha\in\Delta_n^d\bigmid a_\alpha\neq 0 \big\}
\end{equation*}
is an M-convex set (see \cite[Theorem 2.25]{BrandenHuh}). Vice versa, given an M-convex set $S\subseteq\Delta_n^d$, the \emph{indicator polynomial}
\begin{equation*}
    p_S(w)=\sum_{\alpha\in S}\frac{w^\alpha}{\alpha!}
\end{equation*}
is Lorentzian (see \cite[Theorem 3.10]{BrandenHuh}). 

\item Linear coordinate changes: Given a Lorentzian polynomial $p(w)\in L_m^d$ and a matrix $A\in\R_{\geq 0}^{m\times n}$, the linear coordinate change $p(Az)$ is also Lorentzian (see \cite[Theorem 2.10]{BrandenHuh}).
\item Given two Lorentzian polynomials $p\in L_n^d$ and $q\in L_{n'}^{d'}$, their product $p\cdot q\in L_{n+n'}^{d+d'}$ is again a Lorentzian polynomial (see \cite[Corollary 2.32]{BrandenHuh}). 
\item Given a Lorentzian polynomial $p(w)=\sum_{\alpha\in\Delta_n^d}a_\alpha w^\alpha$ as well as $\kappa\in\Z_{\geq 0}^n$, the \emph{$\kappa$-truncations}
\begin{equation*}
    p_{\leq\kappa}(w)=\sum_{\alpha\leq \kappa}a_\alpha w^\alpha \qquad \textrm{ and }\qquad p_{\geq\kappa}(w)=\sum_{\alpha\geq \kappa}a_\alpha w^\alpha
\end{equation*}
are also Lorentzian (see \cite[Proposition 3.3]{RossSuessWannerer}). Here we write $\alpha\leq \beta$ for $\alpha,\beta\in \Z_{\geq 0}^n$ if and only if $\alpha_i\leq \beta_i$ for all $1\leq i\leq n$.
\end{itemize}




We are now ready to prove Theorems~\ref{mainthm_bimatroidregularminors=ultralogconcave}, \ref{mainthm_rectanglesofmaximalrank=ultralogconcave}, and~\ref{mainthm_logconcavemorphism}.

\begin{proof}[Proof of Theorem~\ref{mainthm_bimatroidregularminors=ultralogconcave}]
    Let $\sfA$ be a bimatroid  on the ground set $E\times F$ and set $m=\vert E\vert$ as well as $n=\vert F\vert$. 
    Recall that the basis generating polynomial $p_{\calB(\widehat \sfA)}$ of the associated extended matroid $\widehat \sfA$ is given by
    \[  p_{\calB(\widehat \sfA)}(\underline{x}, \underline{y}) = \sum_{B \in \mathcal{B}(\hat \sfA)} \prod_{e \in B \cap E} x_e \prod_{f \in B \cap F} y_f \ . \]
    By \cite[Theorem~3.10]{BrandenHuh} we have that $p_{\calB(\widehat \sfA)}$ is Lorentzian (it is the generating function of the M-convex set of bases of a matroid). Note that by Proposition \ref{prop_extendedmatroid} this polynomial is equal to the regular minor polynomial $p_{\calR(\sfA)}$ of $\sfA$. 
    
    Now apply the coordinate transformation $x_e = x$ for all $e \in E$ and $y_f = y$ for all $f \in F$ and use \cite[Theorem~2.10]{BrandenHuh} to conclude that 
    \[ q(x, y) =  \sum_{k = 0}^{m} R_k(\sfA) x^{m-k} y^k\]
     is Lorentzian as well. Lorentzian bivariate polynomials were characterized in \cite[Example~2.26]{BrandenHuh} and this implies that the sequence $R_k(\sfA)$ fulfils 
     \begin{equation*}
        \frac{R_k(\sfA)^2}{{m\choose k}^2}\geq \frac{R_{k+1}(\sfA)}{{m\choose k+1}}\cdot \frac{R_{k-1}(\sfA)}{{m\choose k-1}}
    \end{equation*} 
    for all $k\geq 1$. The same argument, applied to $\sfA^T$ shows 
    \begin{equation*}
        \frac{R_k(\sfA)^2}{{n\choose k}^2}\geq \frac{R_{k+1}(\sfA)}{{n\choose k+1}}\cdot \frac{R_{k-1}(\sfA)}{{n\choose k-1}} 
    \end{equation*} 
    for all $k\geq 1$.
    We now combine these two inequalities and find
    \begin{equation*}
        \frac{R_k(\sfA)^2}{{m\wedge n\choose k}^2}\geq \frac{R_{k+1}(\sfA)}{{m\wedge n\choose k+1}}\cdot \frac{R_{k-1}(\sfA)}{{m\wedge n \choose k-1}} 
    \end{equation*} 
    for all $k\geq 1$.    
\end{proof}

\begin{proof}[Proof of Theorem \ref{mainthm_rectanglesofmaximalrank=ultralogconcave}]
    Consider the homogenized independent set generating polynomial $p_{\calI(\widehat{\sfA})}$ of the extended matroid $\hat A$. In our situation it may be written as 
    \begin{equation*}
        p_{\calI(\widehat{A})}(\underline x, \underline y,z) = \sum_{S \in \calI(\hat A)} z^{N - |S|} \prod_{e \in  S\cap E} x_e \prod_{f \in S \cap F} y_f  \ .
    \end{equation*}
    Then $p_{\calI(\widehat{\sfA})}$ is Lorentzian by \cite[Theorem 4.10]{BrandenHuh} (see the explanation right after \cite[Theorem 4.14]{BrandenHuh}). Set $N=\vert E\vert +\vert F\vert$. 
    By Proposition \ref{prop_regularrectangle} above, the polynomial $p_{\calI(\widehat{\sfA})}$ can be rewritten as 
    \begin{equation*}
        p_{\calI(\widehat{\sfA})}(\underline{x},\underline{y},z)= \sum_{(I, J) \in \calR \calR^\updownarrow(\sfA)} z^{N - |I^c| - |J|} \prod_{e \in I^c} x_e \prod_{f \in J} y_f \ .
    \end{equation*}
    Now we apply the coordinate transformation $x_e = z$ for all $e \in E$ and $y_f = y$ for all $f \in F$ to obtain
    \begin{equation*}
        q(y,z) = \sum_{k = 0}^N RR_k^\updownarrow(\sfA) z^{N - k} y^k \ .
    \end{equation*}  
    The polynomial $q$ is still Lorentzian by \cite[Theorem 2.10]{BrandenHuh}. 
    Thus, by \cite[Example 2.26]{BrandenHuh}, the sequence of coefficients is ultra log-concave. 
    This proves the first part of the theorem. The second statement is shown the same way using $\sfA^T$ instead of $\sfA$. 
\end{proof}

\begin{proof}[Proof of Theorem \ref{mainthm_logconcavemorphism}]
    Let $\sfM$ and $\sfM'$ be matroids on ground sets $F$ and $F'$ and consider a map $\phi\colon F\rightarrow F'$ that defines a morphism from $\sfM$ to $\sfM'$. Write $r:=\rk(\sfM)$. When $\phi(F)$ does not span $\sfM'$, the set of bases of $\phi$ is empty and there is nothing to show. So, setting $\sfN=\phi^\ast \sfM'$, it is enough to consider a quotient $\sfN$ of $\sfM$ on $F$. 
    Let $\widetilde{\sfM}_\phi$ be the matroid on ground set $Q \sqcup F$ constructed in Proposition~\ref{prop_basesofmorphismsasbimatroids} and let 
    \[ p_{\calB(\widetilde{\sfM}_\phi)}(\underline w) = \sum_{\substack{S \subseteq Q, T \subseteq F \\ S \sqcup T \in \calB(\widetilde{\sfM}_\phi)}} \prod_{q \in S} w_q \cdot \prod_{f \in T} w_f \]
    its basis generating polynomial. This is Lorentzian by \cite[Theorem 3.10]{BrandenHuh}. Setting all variables $w_q$ with $q \in Q$ equal to $x$ we obtain the weak basis generating polynomial 
        \begin{equation*}
            p_{w\calB(\phi)}(x, \underline w)=\sum_{T\in\calB(\phi)}{r\choose |T|} x^{\vert E\vert-\vert T\vert}\prod_{f\in T}w_f
        \end{equation*} 
    of $\phi$ by Proposition \ref{prop_basesofmorphismsasbimatroids}. This is again Lorentzian by \cite[Theorem 2.10]{BrandenHuh}. We now set all other variables $w_f$ for $f\in F$ equal to $y$ and obtain the polynomial
    \begin{equation*}
        q(x,y)=\sum_{k\geq 0}{r\choose k} B_k(\phi) x^{\vert E\vert -k}y^k \ .
    \end{equation*}
    This polynomial is also Lorentzian by \cite[Theorem 2.10]{BrandenHuh} and so the ultra log-concavity of the sequence ${r \choose  k}\cdot B_k(\phi)$ follows by \cite[Example 2.26]{BrandenHuh}. But this means that $B_k(\phi)$ itself is log-concave.
\end{proof}


\section{Volume polynomials}

Let $X$ be a normal irreducible projective variety of dimension $d$ over an algebraically closed field $\mathbb{K}$ and consider a collection $H$ of nef $\Q$-divisors $H_1,\ldots, H_n$ on $X$. Then, by \cite[Theorem 4.6]{BrandenHuh} the \emph{volume polynomial}
\begin{equation*}
\vol_H(w):=(w_1H_1+\cdots+w_nH_n)^d=\sum_{\vert \alpha\vert =d}\frac{d!}{\alpha!} (H_1^{\alpha_1}\cdots H_n^{\alpha_n}) \cdot w^\alpha
\end{equation*}
is Lorentzian. In general, we say that a Lorentzian polynomial $p(w)\in L_n^d$ is a \emph{volume polynomial}, if there exists a normal irreducible projective variety of dimension $d$ over an algebraically closed field $\mathbb{K}$ as well as a collection $H$ of nef $\Q$-divisors $H_1,\ldots, H_n$ on $X$ such that $\vol_H(w)=p(w)$. 

The class of volume polynomials satisfies the following useful properties, which are probably well-known and have been communicated to us by H.~S\"uss.

\begin{proposition} \label{prop_invariancevolumepolynomials}
    \begin{enumerate}[(i)]
\item Given two volume polynomials $p\in L_n^d$ and $q\in L_{n'}^{d'}$, their product $p\cdot q\in L_{n+n'}^{d+d'}$ is again a volume polynomial. 
\item Given a volume polynomial $p(w)\in L_m^d$ and a matrix $A\in \Q_{\geq 0}^{m\times n}$ with non-negative rational entries, the linear transformation $p(Az)\in L_n^d$ is also a volume polynomial. 
\end{enumerate}
\end{proposition}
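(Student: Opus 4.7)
The plan is to handle the two parts separately, with (ii) being essentially tautological and (i) requiring a multi-step construction that uses a finite cover to remove a multinomial factor.

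Part (ii) follows directly from the definition. Writing $p = \vol_{X, H}$ for nef $\Q$-divisors $H_1, \ldots, H_m$ on a normal irreducible projective variety $X$ of dimension $d$, the substitution $w_i = (Az)_i = \sum_j A_{ij} z_j$ into $p(w) = (\sum_i w_i H_i)^d$ produces
$$ p(Az) = \Bigl(\sum_{j=1}^n z_j \, \tilde H_j\Bigr)^d = \vol_{X, \tilde H}(z), $$
where $\tilde H_j := \sum_i A_{ij} H_i$. Since $A_{ij} \in \Q_{\geq 0}$, each $\tilde H_j$ is a non-negative $\Q$-linear combination of nef $\Q$-divisors and therefore itself a nef $\Q$-divisor, so $p(Az)$ is a volume polynomial on the same variety $X$.

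For part (i), let $p = \vol_{X, H}$ on $X$ of dimension $d$ and $q = \vol_{Y, H'}$ on $Y$ of dimension $d'$. The first step is to realize $N \cdot p \cdot q$, where $N := \binom{d+d'}{d}$, as a volume polynomial on $Z_0 := X \times Y$ equipped with the pullback divisors $\pi_X^* H_i$ and $\pi_Y^* H'_j$ (which are nef, since pullbacks of nef divisors along any morphism are nef). Expanding
$$ \Bigl(\sum_i w_i \, \pi_X^* H_i + \sum_j w'_j \, \pi_Y^* H'_j\Bigr)^{d+d'} $$
binomially, only the term of bi-degree $(d, d')$ survives, since for any class $\alpha$ on $X$ the intersection $(\pi_X^* \alpha)^k = \pi_X^*(\alpha^k)$ vanishes for $k > d$ by dimension of $X$, and analogously for $\pi_Y^*$ when $d+d'-k > d'$. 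A direct computation using the projection formula then yields $\vol_{Z_0}(w, w') = N \cdot p(w) \cdot q(w')$.

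To eliminate the factor $N$, the plan is to establish the auxiliary claim that any positive rational multiple of a volume polynomial is again a volume polynomial. Two operations preserve volume polynomials while rescaling: scaling every divisor on an $e$-dimensional variety by $\lambda \in \Q_{>0}$ multiplies the volume polynomial by $\lambda^e$, and pulling back along a finite surjective morphism $\tilde W \to W$ of degree $k$ (with $\tilde W$ normal, irreducible, and projective of the same dimension) multiplies it by $k$ via the projection formula for finite maps. Writing any target multiplier as $c = a/b \in \Q_{>0}$ with $a, b \in \Z_{\geq 1}$ and taking $\lambda = 1/b$, $k = a b^{e-1}$, one obtains $k \lambda^e = a/b = c$. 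Applied with $c = 1/N$ and $e = d + d'$ to the volume polynomial $N \cdot p \cdot q$ constructed above, this realizes $p \cdot q$ itself as a volume polynomial, concretely on a finite degree-$N^{d+d'-1}$ cover of $X \times Y$ with the pullback divisors scaled by $1/N$. The main obstacle is the existence of finite covers of any prescribed positive degree between normal irreducible projective varieties over an algebraically closed field; this is however standard, for instance via cyclic covers branched along generic sections of sufficiently positive ample line bundles, with compositions of cyclic covers of prime-power degree used to bypass any characteristic obstruction.
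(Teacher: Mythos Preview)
Your treatment of part~(ii) is correct and identical to the paper's: both simply observe that $p(Az)=\vol_{X,\tilde H}(z)$ for $\tilde H_j=\sum_i A_{ij}H_i$, which are nef $\Q$-divisors because $A$ has non-negative rational entries.

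For part~(i), both you and the paper begin with the product variety $X\times X'$ equipped with the pulled-back divisors. The paper then asserts $\vol_{\tilde H}(\tilde w)=\vol_H(w)\cdot\vol_{H'}(w')$ and stops. You correctly compute that what one actually obtains is $\binom{d+d'}{d}\,p(w)\,q(w')$; a quick check on $\mathbb{P}^1\times\mathbb{P}^1$ with the two rulings confirms the factor of $2$. So the paper's argument, as written, has a gap that your proof fills.

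Your method of removing the factor---showing that any positive rational multiple of a volume polynomial is again a volume polynomial by combining the scaling $H_i\mapsto\lambda H_i$ (contributing $\lambda^e$) with pullback along a finite surjective morphism of degree $k$ (contributing $k$, via the projection formula)---is sound. The one point that deserves a word of care is the existence of an irreducible normal projective cover of prescribed degree $k$ over a given normal irreducible projective variety of dimension $\ge 1$. The cleanest way to say this is: take the normalization of $W$ in any degree-$k$ extension of its function field $K(W)$, and such extensions exist for every $k$ because $t^k-x$ is irreducible over $K(W)$ for a suitably chosen transcendental element $x$ (Eisenstein at the place $x=0$), independently of the characteristic. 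This sidesteps any delicacy about inseparable cyclic covers. With that in place, your argument is complete and in fact more careful than the paper's.
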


\begin{proof}
For Part (i) we are given two normal irreducible projective varieties $X$ and $X'$ as well as two collections $H=(H_1,\ldots, H_n)$ and $H'=(H'_1,\ldots, H'_{n'})$ of nef $\Q$-divisors on $X$ and $X'$, respectively. Then we may consider the collection $\widetilde{H}=(H_1\times X',\ldots, H_n\times X', X\times H_1', \ldots, X \times H_{n'}')$ of nef $\Q$-divisors on the product $X\times X'$ and find
\begin{equation*}
    \vol_{\widetilde{H}}(\widetilde{w})=\vol_H(w)\cdot \vol_{H'}(w') 
\end{equation*}
in the variables $\widetilde{w}=(w,w')$ with $w=(w_1,\ldots, w_n)$ and $w'=(w_1',\ldots, w'_{n'})$.

For Part (ii) we consider a normal irreducible projective variety $X$ as well as a collection $H=(H_1,\ldots, H_m)$ of nef $\Q$-divisors on $X$. For a matrix $A=[a_{ij}]_{\substack{1\leq i\leq m 
\\ 1\leq j\leq n}}\in \Q_{\geq 0}^{m\times n}$ with non-negative entries, the $\Q$-divisors $H_j^A=a_{1j}H_1+\cdots + a_{mj}H_m$ are nef for all $1\leq j\leq n$. We write $H^A$ for the collection of $\Q$-divisors $H^A_j$ and find
\begin{equation*}\begin{split}
    \vol_H(Aw)&=\bigg(\Big(\sum_{1\leq j\leq n}a_{1j}w_j\Big)H_1+\cdots+\Big(\sum_{1\leq j\leq n}a_{nj}w_j\Big)H_m\bigg)^d\\
    &= \bigg(w_1\Big(\sum_{1\leq i\leq m}a_{i1}H_i\Big)+\cdots +w_n\Big(\sum_{1\leq i\leq m}a_{in}H_i\Big)\bigg)^d\\
    &=\vol_{H^A}(w) \ .
\end{split}\end{equation*}
Therefore, $\vol_H(Aw)$ is again a volume polynomial. 
\end{proof}

A central source of volume polynomials are \emph{matroid Schubert varieties}, which have first been studied in \cite{ArdilaBoocher} for realizable matroids and whose geometry motivates the development of the singular Hodge theory of matroids in \cite{BHMPW}. We also refer the reader to  \cite{EurLarson} for a generalization of matroid Schubert varieties to the setting of (realizable) discrete polymatroids/M-convex sets. 

\begin{example}
Let $\mathbb{K}$ be an algebraically closed field. Let $\sfM$ be a realizable matroid of rank $r$ on a ground set $E$, realized by a set of vectors $\{v_e\}_{e \in E}$ spanning a finite-dimensional $\mathbb{K}$-vector space $V$. Without loss of generality, $V$ is spanned by the $v_e$. Consider the natural surjective linear map $\bigoplus \mathbb{K} v_e\rightarrow V$; its dual is an injective linear map $V^\ast \rightarrow \prod_{e\in E}\mathbb{K} v^\ast_e$. The closure of $V^\ast$ in the multiprojective space $(\PP^1)^E$ is called the \emph{matroid Schubert variety} of $\sfM$ and will be denoted by $X_\sfM$. The hyperplane divisors on every projective line define a collection of ample divisors $H=(H_e)_{e\in E}$ on $X_\sfM$. In \cite[Theorem 1.3 (c)]{ArdilaBoocher} the authors show that for every multiset $\{e_1,\ldots, e_r\}$ in $E$ with underlying set $S$ we have
\begin{equation*}
(H_{e_1}\cdots H_{e_r})=\begin{cases}
1 & \textrm{ if } S \textrm{ is a basis of $\sfM$}\\
0 & \textrm{ else. }
\end{cases}
\end{equation*}
This implies that for the \emph{basis generating polynomial} $p_\sfM(w)=\sum_{B\in\calB(\sfM)}\prod_{b\in B}w_b$ of $\sfM$ we have
\begin{equation*}
 p_\sfM(w) =\frac{1}{d!}\vol_H(w) \ .
\end{equation*}
So the basis generating polynomial of a matroid which is realizable over $\mathbb{K}$ is always a volume polynomial. 
\end{example}

We are now ready to prove Theorems \ref{mainthm_regularminorpolynomial=volumepolynomial} and \ref{mainthm_weakvolumepolynomial}.

\begin{proof}[Proof of Theorem \ref{mainthm_regularminorpolynomial=volumepolynomial}]
 Let $A\in \mathbb{K}^{E\times F}$ be a matrix. Apply \cite[Theorem 1.3 (c)]{ArdilaBoocher} to find that the basis generating polynomial $p_{\widehat{\sfA}}$ of the matroid $\widehat{\sfA}$ associated to the extended matrix $\widehat{A}=[I_E\vert A]$ is a volume polynomial. By Proposition \ref{prop_extendedmatroid} this is the same as the regular minor polynomial $p_{\sfA}$ of the bimatroid $\sfA$ associated to $A$. Thus Theorem \ref{mainthm_regularminorpolynomial=volumepolynomial} is proved.
\end{proof}

\begin{proof}[Proof of Theorem \ref{mainthm_weakvolumepolynomial}]
Let $\sfM$ and $\sfM'$ be matroids on ground sets $F$ and $F'$, realized by vectors $(v_f)_{f\in F}$ and $(v'_{f'})_{f'\in F'}$ spanning vector spaces $V$ and $V'$, respectively.  Suppose further that we have a map $\phi\colon F\rightarrow F'$ and a $\mathbb{K}$-linear map $\Phi\colon V\rightarrow V'$ such that $\Phi(v_f)=v'_{\phi(f)}$ for all $f\in F$. Then $\phi$ defines a morphism of matroids. 

When $\Phi$ does not surject onto $V'$, there are no bases of $\phi$ and there is nothing to show. Hence, we may assume that $\Phi$ is surjective. 

Denote by $U$ the kernel of $\Phi$ and let $\alpha\geq \nul(\phi)$ be an integer. Since $\mathbb{K}$ is infinite, we may generically choose a generating set $\{u_e\}_{e\in E}$ of $U$ with $\vert E\vert=\alpha$ such that, given a linearly independent collection of vectors $(v_f)_{f\in T}$ (for $T\subseteq F$), the images $\big(\Phi(v_f)\big)_{f\in T}$ span $V'$ if and only if for every subset $S\subseteq E$ with $\vert S\vert +\vert T\vert =\dim V$ the set $\{u_e, v_f\}_{e\in S, f\in T}$ is a basis of $V$. To see that such a choice is indeed possible, note that we only need to find the $(\alpha \cdot \dim V)$-many entries of the $u_e$ such that for all collections of vectors $(v_f)_{f\in T}$ with $\big(\Phi(v_f) \big)_{f \in T}$ spanning $V'$ and every $S \subseteq E$ of the correct size, $\det \big( (u_e | v_f)_{e \in S, f \in T} \big) \neq 0$. A vanishing determinant is a codimension 1 condition in the parameter space $V^\alpha$ and there are only finitely many of these conditions. With infinite $\mathbb{K}$ a choice of the $u_e$ avoiding all determinatal conditions simultaneously is always possible.

Denote by $\widetilde{\sfM}_\alpha$ the matroid associated to $\{u_e, v_f\}_{e\in E, f\in F}$. We  consider the basis generating polynomial 
\begin{equation*}
p_{\calB(\widetilde{\sfM}_\alpha)}(w)=\sum_{B\in\calB(\widetilde{\sfM}_\alpha )}\prod_{e\in B\cap E}w_e \cdot \prod_{f\in B\cap F} w_f
\end{equation*} 
of $\widetilde{\sfM}_\alpha$. This is a volume polynomial by the construction of matroid Schubert varieties for realizable matroids (i.e., by \cite[Theorem 1.3 (c)]{ArdilaBoocher}). Write $r$ for the rank of $\sfM$. Setting all variables $w_e$ for $e\in E$ equal to $w_0$ and noting that $\vert E\vert=\alpha$, we find the $\alpha$-weak basis generating polynomial 
\begin{equation*}
    p^\alpha_{\calB(\phi)}(w)=\sum_{k\geq 0}{\alpha\choose r-k}w_0^{r-k}\cdot\sum_{T\in\calB_k(\phi)}\prod_{f\in  T} w_f
\end{equation*} 
of the morphism $\phi$. This is a volume polynomial by Proposition \ref{prop_invariancevolumepolynomials} (ii) above. 
\end{proof}


\section{Additional data and conflict of interest statement}

There is no additional data to this article. There is no conflict of interest for any of the authors. 





\bibliographystyle{amsalpha}
\bibliography{biblio}{}


\appendix



\end{document}